\numberwithin{equation}{section}
\def\hangbox to #1 #2{\vskip3pt\hangindent #1\noindent \hbox to #1{#2}$\!\!$}
\newtheorem{thm}{Theorem}[section]
\newtheorem{lem}[thm]{Lemma}
\newtheorem{cor}[thm]{Corollary}
\newtheorem{prop}[thm]{Proposition}
\theoremstyle{definition}
\theoremstyle{remark}
\def\C{{\mathbb C}}
\def\N{{\mathbb N}}
\def\R{{\mathbb R}}
\def\sfrac#1#2{\kern.1em\raise.5ex\hbox{$#1$}
        \kern-.1em/\kern-.05em\lower.25ex\hbox{$#2$}}
\def\vp{\varepsilon}
\newcommand{\fw}{\text{\fw}}
\begin{document}

\title{Locality and stability for phase retrieval}

\author{Wedad Alharbi}\author{Salah Alshabhi}\author{ Daniel Freeman}\author{ Dorsa Ghoreishi}
\address{Department of Mathematics and Statistics\\
St Louis University\\
St Louis MO 63103  USA
} \email{wedad.alharbi@slu.edu}\email{salah.alshabhi@slu.edu}
\email{daniel.freeman@slu.edu}
\email{dorsa.ghoreishi@slu.edu}

\begin{abstract}
    A frame $(x_j)_{j\in J}$ for a Hilbert space $H$ is said to do phase retrieval if for all distinct vectors $x,y\in H$  the magnitude of the frame coefficients $(|\langle x, x_j\rangle|)_{j\in J}$ and $(|\langle y, x_j\rangle|)_{j\in J}$ distinguish $x$ from $y$ (up to a unimodular scalar).  We consider the weaker condition where the magnitude of the frame coefficients distinguishes $x$ from every vector $y$  in a small neighborhood of $x$ (up to a unimodular scalar).  We prove that some of the important theorems for phase retrieval hold for this local condition, where as some theorems are completely different.  We prove as well that when considering stability of phase retrieval, the worst stability inequality is always witnessed at orthogonal vectors.  This allows for much simpler calculations when considering optimization problems for phase retrieval.
    
\end{abstract}

\thanks{2010 \textit{Mathematics Subject Classification}: 42C15, 49N45}

\thanks{The third author was supported by grant 706481 from the Simons Foundation.  The third and fourth authors were supported by grant 2154931 from the National Science Foundation.}

\maketitle

\section{Introduction}

We say that a collection of vectors  $(x_j)_{j\in J}\subseteq H$ is a {\em frame} of a Hilbert space $H$ if there exists constants $B\geq A>0$ so that
\begin{equation}
    A\|x\|^2\leq\sum_{j\in J}|\langle x,x_j\rangle|^2\leq B\|x\|^2\hspace{1cm}\textrm{ for all }x\in H.
\end{equation}
The {\em analysis operator} of the frame $(x_j)_{j\in J}$ is the map $\Theta:H\rightarrow\ell_2(J)$ given by $\Theta(x)=(\langle x,x_j\rangle)_{j\in J}$ for all $x\in H$.\\  

Instead of considering a discrete collection of vectors, we now let $(x_t)_{t\in \Omega}\subseteq H$ be indexed by a measure space $(\Omega,\mu)$.  We say that  $(x_t)_{t\in \Omega}\subseteq H$ is a {\em continuous frame} of $H$ if there exists constants $B\geq A>0$ so that
\begin{equation}
    A\|x\|^2\leq\int_{t\in \Omega}|\langle x,x_t\rangle|^2\leq B\|x\|^2\hspace{1cm}\textrm{ for all }x\in H.
\end{equation}
The {\em analysis operator} of the continuous frame $(x_t)_{t\in \Omega}$ is the map $\Theta:H\rightarrow L_2(\Omega)$ given by $\Theta(x)=(\langle x,x_j\rangle)_{t\in \Omega}$ for all $x\in H$.  Note that every frame $(x_j)_{j\in J}$ may be realized as a continuous frame by endowing $J$ with counting measure.\\  

A frame $(x_j)_{j\in J}$ for a Hilbert space $H$  allows for a continuous, linear, and stable reconstruction of any vector $x\in H$ from the collection of frame coefficients $\Theta(x)=(\langle x,x_j\rangle)_{j\in J}$. 
However, there are many situations where one is only able to obtain the magnitude of the frame coefficients $|\Theta(x)|=(|\langle x,x_j\rangle|)_{j\in J}$.  This problem arises in speech recognition \cite{BR}, coherent diffraction imaging \cite{MCKS}, X-ray crystallography \cite{T}, transmission electron microscopy \cite{K}, and in many other areas of physics, engineering, signal processing, and applied mathematics.  The theory of phase retrieval was developed to tackle this problem, and it has grown into an exciting area of mathematical research.   We recommend \cite{GKR} and \cite{FaS} for surveys on the mathematics of phase retrieval.\\

Let $(x_t)_{t\in \Omega}$ be a continuous frame of Hilbert space $H$ with analysis operator $\Theta:H\rightarrow L_2(\Omega)$.  If $x,y\in H$ and $x=\lambda y$ for some  scalar $|\lambda|=1$ then $|\Theta(x)|=|\Theta(y)|$.  Thus, although it is possible to recover any vector $x\in H$ from the frame coefficients $\Theta(x)=(\langle x,x_t\rangle)_{t\in\Omega}$, it is not possible to distinguish $x$ and $\lambda x$ using only the magnitudes of the frame coefficients.  We say that $(x_t)_{t\in \Omega}$ does {\em phase retrieval} if whenever $x,y\in H$ and $|\Theta(x)|=|\Theta(y)|$ we have that $x=\lambda y$ for some  scalar $|\lambda|=1$.  In applications it is not a problem if one recovers a signal $\lambda x$ instead of $x$.  For example, in acoustics, the sound wave $x$ sounds exactly the same as the sound wave $\lambda x$, and in X-ray crystallography, multiplying $x$ by $\lambda$ corresponds to a rigid rotation and preserves the structure of the crystal.  As any application of phase retrieval will involve error, it is important that we are not only able to obtain $x$ (up to a unimodular scalar $\lambda$) from the magnitudes of the frame coefficients $|\Theta x|$, but that this recovery be also stable.  That is, we want that the recovery of $[x]_\sim$ from $|\Theta x|$ to be Lipschitz continuous, where $\sim$ is the equivalence relation on $H$ given by $x\sim y$ if and only if $x=\lambda y$ for some $|\lambda|=1$.\\  

Let $(x_t)_{t\in \Omega}$ be a continuous frame of a Hilbert space $H$ with analysis operator $\Theta:H\rightarrow L_2(\Omega)$ and let $C>0$.  We say that  $(x_t)_{t\in \Omega}$ does {\em $C$-stable phase retrieval} if 
\begin{equation}\label{E:intro}
    \min_{|\lambda|=1}\|x-\lambda y\|\leq C\big\||\Theta x|-|\Theta y|\big\|\hspace{1cm}\textrm{ for all }x,y\in H.
\end{equation}

Calculating stability bounds for phase retrieval can be notoriously difficult and requires checking \eqref{E:intro} for all pairs of vectors $x,y\in H$.  Our first contribution in Section \ref{S:ortho} is to prove the following theorem which implies that $(x_t)_{t\in \Omega}$ does $C$-stable phase retrieval if and only if \eqref{E:intro} holds for all pairs of orthogonal vectors $x,y\in H$.  This greatly simplifies many calculations as if $x$ and $y$ are orthogonal then $\|x-\lambda y\|=(\|x\|^2+\|y\|^2)^{1/2}$.

\begin{thm}\label{T:Iortho}
Let $(x_t)_{t\in\Omega}$ be a continuous frame for a Hilbert space $H$ with analysis operator $\Theta:H\rightarrow L_2(\Omega)$.  Let $x,y\in H$. Then there exists $x_o,y_o\in span\{x,y\}$ with $\|x_o\|=1$, $\|y_o\|\leq 1$ and $\langle x_o,y_o\rangle=0$ such that
$$\frac{\big\||\Theta x|-|\Theta y|\big\|}{\min_{|\lambda|=1}\|x-\lambda y\|}\geq \frac{\big\||\Theta x_o|-|\Theta y_o|\big\|}{\min_{|\lambda|=1}\|x_o-\lambda y_o\|}=\frac{\big\||\Theta x_o|-|\Theta y_o|\big\|}{(1+\|y_o\| |^2)^{1/2}}.
$$
\end{thm}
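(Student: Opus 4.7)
The strategy uses the symmetries of the stability ratio $r(u,v):=\||\Theta u|-|\Theta v|\|/\min_{|\lambda|=1}\|u-\lambda v\|$ to reduce the pair $(x,y)$ to a convenient normalized form, then produces an orthogonal pair via compactness inside $V:=\operatorname{span}\{x,y\}$. Since $r$ is symmetric, invariant under common positive scaling, and invariant under individual unimodular multiplications (the numerator is phase-blind, while the denominator absorbs the unimodular factor into $\lambda$), I may assume $\|x\|=1\geq\|y\|$ and $\langle x,y\rangle\geq 0$, whence $\min_{|\lambda|=1}\|x-\lambda y\|=\|x-y\|$. In the degenerate case where $y$ is a scalar multiple of $x$, the pair $(x_o,y_o)=(x,0)$ verifies the claim at once, so I assume $V$ is two-dimensional.

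The set $K:=\{(u,v)\in V\times V : \|u\|=1,\ \|v\|\leq 1,\ \langle u,v\rangle=0\}$ is compact, and the continuous functional $\rho(u,v):=\||\Theta u|-|\Theta v|\|/(1+\|v\|^2)^{1/2}$ attains its infimum at some $(x_o,y_o)\in K$. For this pair, orthogonality together with $\|x_o\|=1$ yields $\min_{|\lambda|=1}\|x_o-\lambda y_o\|^2=1+\|y_o\|^2$, giving the displayed equality of the theorem. It remains to show $\rho(x_o,y_o)\leq r(x,y)$, which is equivalent to the claim that the infimum of $r$ over non-proportional pairs in $V^2$ is attained (at least) on orthogonal pairs.

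To establish this, I would pass to the compact normalization $\{(u,v)\in V^2 : \|u\|^2+\|v\|^2=2\}$, write $r^2=F/G$ with $F(u,v):=\||\Theta u|-|\Theta v|\|^2$ and $G(u,v):=2-2|\langle u,v\rangle|$, and examine the first-order optimality conditions at a minimizer (via Wirtinger calculus in the complex case). Combined with the unimodular-phase invariance of $r$ acting separately on each argument (which allows an initial reduction to $\langle u^*,v^*\rangle\geq 0$), these conditions should force a minimizer $(u^*,v^*)$ to satisfy $\langle u^*,v^*\rangle=0$, after which a rescaling places it in $K$.

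The principal obstacle is this comparison step. Naive explicit candidates for $(x_o,y_o)$ can fail: writing $y=\langle x,y\rangle x+z$ with $z\perp x$ and setting $(x_o,y_o)=(x,z/\|z\|)$ gives $\rho(x_o,y_o)>r(x,y)$ already for the Mercedes--Benz frame in $\R^2$ with $(x,y)=(e_1,(0.1,0.99))$, where a direct computation yields $r(x,y)^2\approx 0.598$ while $\rho(x,z/\|z\|)^2=(3-\sqrt 3)/2\approx 0.634$. Nor does the natural one-parameter deformation $y_t:=(1-t)\langle x,y\rangle x+z$ interpolating to the orthogonal pair give a monotone ratio. The pointwise comparison of $(|\langle x,x_t\rangle|-|\langle y,x_t\rangle|)^2$ with its counterpart at a candidate orthogonal pair does not have consistent sign. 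The argument must therefore be global, exploiting the full two-dimensional freedom inside $V$ together with the phase invariance of $r$, rather than relying on a pointwise or single-parameter manipulation.
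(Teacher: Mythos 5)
Your reductions (applying a common unimodular phase so that $\langle x,y\rangle\ge 0$, disposing of the proportional case, and identifying the equality $\min_{|\lambda|=1}\|x_o-\lambda y_o\|=(1+\|y_o\|^2)^{1/2}$) are correct, but the heart of the theorem --- producing an orthogonal pair whose ratio is no larger --- is left as a plan, and the plan as stated does not go through. You propose to minimize $r=F/G$ over the compact sphere $\{(u,v)\in V^2:\|u\|^2+\|v\|^2=2\}$ and to show via first-order conditions that a minimizer is orthogonal. Two problems. First, $F/G$ is of the form $0/0$ exactly on the proportional pairs of that sphere, so the infimum need not be attained: it can be approached only in the limit $v\to u$, in which case there is no minimizer to which optimality conditions apply. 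This is not a hypothetical worry --- the rest of the paper is devoted to the fact that the limiting ``local'' ratio behaves very differently from the ratio at separated pairs --- and knowing that an unattained global infimum is small says nothing about the comparison you actually need, namely $\inf_K\rho\le r(x,y)$ for your fixed pair. Second, even granting a minimizer, the first-order analysis is only asserted (``should force''); $F(u,v)=\||\Theta u|-|\Theta v|\|^2$ is not differentiable where frame coefficients vanish, and one must rule out non-orthogonal critical points being minima, not merely write down stationarity conditions.

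The missing idea is a different one-parameter deformation from the one you tried and discarded. Rather than shrinking the component of $y$ along $x$ (which changes $x-y$ and, as you observed, does not move the ratio monotonically), translate \emph{both} vectors by the same multiple of $x+y$: set $x_r=x-r(x+y)$ and $y_r=y-r(x+y)$ for $r\in[0,1/2]$. Then $x_r-y_r=x-y$ for all $r$, and $\langle x_r,y_r\rangle\ge 0$ on $[0,R]$, where $R\in[0,1/2]$ is a zero of $r\mapsto\langle x_r,y_r\rangle$ (it exists by the intermediate value theorem, since this inner product is $\ge 0$ at $r=0$ and $\le 0$ at $r=1/2$); hence the denominator $\min_{|\lambda|=1}\|x_r-\lambda y_r\|=\|x-y\|$ is constant on $[0,R]$. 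The numerator is controlled pointwise: for each $t$, after a common phase rotation making $\langle x+y,x_t\rangle$ real, the function $r\mapsto\big||\langle x_r,x_t\rangle|-|\langle y_r,x_t\rangle|\big|$ is non-increasing on $[0,1/2]$. So the ratio is non-increasing along the path, and at $r=R$ the pair is orthogonal; dividing by $\max(\|x_R\|,\|y_R\|)$ puts it in the required form. This is the global use of the two-dimensional span that you correctly sensed was needed, but it is realized by a translation that freezes the denominator, not by a variational argument.
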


Theorem \ref{T:Iortho} implies that the worst stability bounds for phase retrieval occur at orthogonal vectors.  That is, the minimum of the ratio $\big\||\Theta x|-|\Theta y|\big\|/\min_{|\lambda|=1}\|x-\lambda y\|$ occurs at  orthogonal vectors.  We now fix $x\in H$ and consider the ratio $\big\||\Theta x|-|\Theta y|\big\|/\min_{|\lambda|=1}\|x-\lambda y\|$ only for values of $y$ in a small neighborhood of $x$.  Let $(x_t)_{t\in\Omega}$ be a continuous frame for a Hilbert space $H$ with analysis operator $\Theta:H\rightarrow L_2(\Omega)$.  We say that a continuous frame does {\em $C$-stable phase retrieval near  $x\in H$} if 
\begin{equation}
   1\leq \lim_{y\rightarrow x}C\frac{\||\Theta x|-|\Theta y|\|}{\min_{|\lambda|=1}\|x-\lambda y\|}.
\end{equation}

There are many instances where it is proven that a particular frame or continuous frame fails to do stable phase retrieval by proving that the stability bound is arbitrarily bad at some orthogonal vectors. 
However, there are important instances where stable phase retrieval can be achieved by restricting the vectors $x,y\in H$ to a subset with some specified structure \cite{ADGY},\cite{CCSW}\cite{CDDL},\cite{GR}.
We consider the stability of phase retrieval when $x\in H$ is fixed and $y$ is restricted to a small neighborhood of $x$.  In Section \ref{S:R} we prove that if $(x_t)_{t\in\Omega}$ is any continuous frame of $\R^n$ and $x\in\R^n$ then 
$(x_t)_{t\in\Omega}$ does stable phase retrieval near $x$.  In particular, if $x\in\R^n$ and $(e_j)_{j=1}^n$ is a basis of $\R^n$ then $(e_j)_{j=1}^n$ does stable phase retrieval near $x$.  This is significant as if $(x_j)_{j=1}^m$ is a frame which does phase retrieval for $\R^n$ then it is necessary that $m\geq 2n-1$ \cite{BCE}.  In Section \ref{S:C} we show that this local stability fails in $\C^n$.  That is, we prove that if $(e_j)_{j=1}^n$ is a basis of $\C^n$ and $x\in\C^n$ is not a multiple of some $e_j$ then  $(e_j)_{j=1}^n$ does not do stable phase retrieval near $x$. \\

In Section \ref{S:inf} we consider stable phase retrieval for infinite dimensional Hilbert spaces.  It is known that no frame or continuous frame for an infinite dimensional Hilbert space can do stable phase retrieval \cite{AG}\cite{CCD}.  In both papers, the instability is shown to occur at orthogonal vectors.  We prove that it is always the case that this instability can be witnessed locally as well.  In particular, we prove that if $(x_t)_{t\in\Omega}$ is a continuous frame of an infinite dimensional Hilbert space $H$ then the set of vectors $x\in H$ where $(x_t)_{t\in\Omega}$ fails to do stable phase retrieval near $x$ is dense in $H$.\\

Instead of considering the stability of phase retrieval near a vector $x$, one could consider norm retrieval or phase retrieval by projections \cite{CCJW}\cite{CGJT}.  We leave the corresponding statements for these topics as open problems.

\section{Orthogonality and stability}\label{S:ortho}
Recall that a continuous frame $(x_t)_{t\in\Omega}$ of $H$ with analysis operator $\Theta$ does $C$ stable phase retrieval if 
\begin{equation}\label{E:stab}
    \min_{|\lambda|=1}\|x-\lambda y\|\leq C\big\||\Theta x|-|\Theta y|\big\|\hspace{1cm}\textrm{ for all }x,y\in H.
\end{equation}

Proving that a frame or continuous frame does $C$-stable phase retrieval requires checking that \eqref{E:stab} is satisfied for every pair of vectors $x,y\in H$.
The following theorem implies that the worst stability constant for phase retrieval is witnessed at orthogonal vectors.  That is, a continuous frame does $C$-stable phase retrieval if and only if \eqref{E:stab} is satisfied for orthogonal vectors.   
\begin{thm}\label{T:ortho}
Let $(x_t)_{t\in\Omega}$ be a continuous frame for a Hilbert space $H$ with analysis operator $\Theta:H\rightarrow L_2(\Omega)$.  Let $x,y\in H$. Then there exists $x_o,y_o\in span(x,y)$ with $\|x_o\|=1$, $\|y_o\|\leq 1$ and $\langle x_o,y_o\rangle=0$ such that
$$\frac{\big\||\Theta x|-|\Theta y|\big\|}{\min_{|\lambda|=1}\|x-\lambda y\|}\geq \frac{\big\||\Theta x_o|-|\Theta y_o|\big\|}{\min_{|\lambda|=1}\|x_o-\lambda y_o\|}=\frac{\big\||\Theta x_o|-|\Theta y_o|\big\|}{(1+\|y_o\| |^2)^{1/2}}.
$$

\end{thm}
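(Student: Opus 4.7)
\textit{Proof plan.} Let $\Psi(u,v):=\||\Theta u|-|\Theta v|\|/\min_{|\lambda|=1}\|u-\lambda v\|$. Both numerator and denominator are jointly positively homogeneous in $(u,v)$ and invariant under replacing either argument by a unit-modulus scalar multiple, and $\Psi$ is symmetric in its two arguments. Using these symmetries I reduce to the normalized situation $\|x\|=1$, $\|y\|\le 1$, $\langle x,y\rangle=:a\in[0,\infty)$, and write $y=ax+bw$ with $w\perp x$, $\|w\|=1$, $b\ge 0$, $a^2+b^2\le 1$. The degenerate case $b=0$ ($y$ a scalar multiple of $x$) is handled by taking $(x_o,y_o)=(x,0)$: both sides of the inequality reduce to $\|\Theta x\|$. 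Assume henceforth $b>0$, so $V:=\mathrm{span}\{x,y\}=\mathrm{span}\{x,w\}$ is two-dimensional, and the remainder of the argument takes place entirely inside $V$.

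Let $K$ be the compact space of normalized pairs $(u,v)\in V\times V$ with $\|u\|=1$ and $\|v\|\le 1$ (modulo the $U(1)$-action $v\mapsto\lambda v$, $|\lambda|=1$), and let $K_\perp\subset K$ be the compact subset of orthogonal pairs. Since $\Psi$ is continuous on $K$, it attains its infimum on $K_\perp$ at some pair $(x_o^\ast,y_o^\ast)$. The normalized $(x,y)$ lies in $K$, so the theorem follows once I establish the identity $\inf_K\Psi=\inf_{K_\perp}\Psi$: it then gives $\Psi(x,y)\ge\inf_K\Psi=\Psi(x_o^\ast,y_o^\ast)$, which is the desired inequality.

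To prove the equality of infima, let $(u^\ast,v^\ast)\in K$ achieve $\inf_K\Psi$ and argue that it must lie in $K_\perp$. Suppose for contradiction $\langle u^\ast,v^\ast\rangle>0$ (after phase normalization). The minimality of $(u^\ast,v^\ast)$ yields Euler--Lagrange conditions: the first-order variation of $\Psi^2$ vanishes (or has the correct sign, on the boundary) in every admissible tangent direction of $K$. Varying $v\mapsto v^\ast-\varepsilon u^\ast+\delta\eta$ with $\eta\in V\cap(u^\ast)^\perp$, the $\varepsilon$-derivative of the squared denominator $1+\|v\|^2-2|\langle u^\ast,v\rangle|$ equals $2(1-\langle u^\ast,v^\ast\rangle)$, strictly positive since $\langle u^\ast,v^\ast\rangle\le\|v^\ast\|\le 1$ with strict inequality (the equality case $v^\ast=u^\ast$ is excluded because then $\Psi$ is undefined). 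The first variation of the squared numerator involves the pointwise quantity $\mathrm{Re}\bigl(\overline{\mathrm{sgn}(\Theta v^\ast(t))}\,\Theta u^\ast(t)\bigr)$ with $\mathrm{sgn}(z):=z/|z|$. Using the Euler--Lagrange conditions obtained by varying $\eta$ in $(u^\ast)^\perp\cap V$, together with an analogous condition for rotating $u^\ast$ on the unit sphere of $V$, one combines the parameters $(\varepsilon,\delta)$ to produce an admissible direction in which $\Psi^2$ decreases strictly at first order, contradicting minimality and forcing $(u^\ast,v^\ast)\in K_\perp$.

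\textit{Main obstacle.} The compactness and reduction steps are routine; the entire difficulty is in the variational step of the last paragraph, namely showing that the Euler--Lagrange system at $(u^\ast,v^\ast)$ forces $\langle u^\ast,v^\ast\rangle=0$. Both the numerator and denominator variations can be of either sign a priori, so the contradiction must emerge from a careful combination of the stationarity conditions in several directions. Additional technical complications arise from (i) the non-smoothness of $|\Theta v^\ast(t)|$ on the set $\{t:\Theta v^\ast(t)=0\}$, where the pointwise phase $\mathrm{sgn}(\Theta v^\ast(t))$ is undefined; (ii) boundary minimizers where $\|v^\ast\|=1$, requiring one-sided conditions; and (iii) in the complex case, the need to work consistently in the quotient by the $U(1)$-action on $v$, which affects the set of admissible perturbations. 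Depending on the authors' approach, one might circumvent parts of this variational analysis by explicitly constructing $(x_o,y_o)$ from $(x,y)$—for instance by choosing $x_o$ to align with a principal direction of the self-adjoint operator $xx^\ast-yy^\ast$ restricted to $V$ and then tuning $\|y_o\|$—followed by a direct computational verification of the inequality.
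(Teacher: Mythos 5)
There is a genuine gap, in fact two. First, your reduction hinges on the assertion that $\inf_K\Psi$ is attained at some $(u^\ast,v^\ast)\in K$, but $\Psi$ is undefined on the diagonal $\{v=\lambda u:|\lambda|=1\}$, so the set on which you are minimizing is \emph{not} compact; the infimum may only be approached as $v\to\lambda u$, where the limit of $\Psi$ depends on the direction of approach and no continuous extension to the diagonal exists in general. (This is exactly the obstruction the paper itself points out in Section \ref{S:ortho} when introducing the function $\Psi$ in \eqref{E:psi}: ``the infimum is being taken over a non-compact set.'' Indeed the whole point of the theorem is to replace that non-compact minimization by one over the compact set $K_\perp$.) Second, even granting a minimizer, the decisive step --- that the Euler--Lagrange conditions force $\langle u^\ast,v^\ast\rangle=0$ --- is only asserted; you yourself flag it as ``the entire difficulty,'' together with the unresolved non-smoothness of $|\Theta v^\ast(t)|$ where $\Theta v^\ast(t)=0$ and the boundary case $\|v^\ast\|=1$. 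As written, the proposal is a plan whose central claim is unproved.

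The paper's argument is constructive and sidesteps both issues. After multiplying $y$ by a unimodular scalar so that $\langle x,y\rangle\in[0,\infty)$ (equivalently $\|x-y\|=\min_{|\lambda|=1}\|x-\lambda y\|$), set $x_r=x-r(x+y)$ and $y_r=y-r(x+y)$ for $r\in[0,1/2]$. An intermediate value argument gives $R\in[0,1/2]$ with $\langle x_R,y_R\rangle=0$, and for all $r\in[0,R]$ one still has $\langle x_r,y_r\rangle\ge 0$, so the denominator $\min_{|\lambda|=1}\|x_r-\lambda y_r\|=\|x_r-y_r\|=\|x-y\|$ is \emph{constant} along the deformation. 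The key computation is pointwise: for each $t$, the function $f_t(r)=\bigl||\langle x_r,x_t\rangle|-|\langle y_r,x_t\rangle|\bigr|$ is non-increasing on $[0,1/2]$, so the numerator can only decrease. Rescaling $(x_R,y_R)$ by the larger of the two norms produces the pair $(x_o,y_o)$. Your closing remark --- that one might instead ``explicitly construct $(x_o,y_o)$ from $(x,y)$'' --- is the right instinct; if you pursue this problem further, that is the route to take, though the correct deformation is the affine path above rather than a principal-direction choice for $xx^\ast-yy^\ast$.
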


\begin{proof}

Let $x,y\in H$.  After multiplying $y$ by a unimodular scalar, we may assume that $\|x-y\|= \min_{|\lambda|=1}\|x-\lambda y\|$. Let $\lambda\in \C$ with $|\lambda|=1$.  We have that 
\begin{align*}
\|x-\lambda y\|^2&=\langle x-\lambda y,x-\lambda y\rangle\\
&=\langle x,x\rangle^2-\langle \lambda y,x\rangle-\langle x,\lambda y\rangle +\langle y,y\rangle^2\\
&=\langle x,x\rangle^2-2Real(\lambda\langle y,x\rangle) +\langle y,y\rangle^2
\end{align*}
Thus, we have that $\|x-y\|= \min_{|\lambda|=1}||x-\lambda y||$ is equivalent to $\langle x,y\rangle\in\R$ and $\langle x,y\rangle\geq0$.
As $\langle x-0(x+y),y-0(x+y)\rangle\geq 0$ and $\langle x-\frac{1}{2}(x+y),y-\frac{1}{2}(x+y)\rangle\leq 0$, there exists $R\in[0,1/2]$ such that $\langle x-R(x+y),y-R(x+y)\rangle=0$.  
We have for all $r\in [0,R]$ that $\langle x-r(x+y),y-r(x+y)\rangle\geq0$ and hence 
\begin{equation}\label{E:same}
\|x-y\|=\|x-r(x+y)-(y-r(x+y))\|=\min_{|\lambda|=1}\|x-r(x+y)-\lambda(y-r(x+y))\|
\end{equation}

Let $t\in\Omega$.  Consider the function $f_t:[0,1/2]\rightarrow \R$ given by
$$f_t(r)=\big|| \langle x-r(x+y),x_t\rangle|-|\langle y-r(x+y),x_t \rangle|\big|\hspace{1cm}\textrm{ for all }r\in[0,1/2].
$$
We will prove that $f_t$ is decreasing on $[0,1/2]$.  To simplify the notation, we let 
$\langle x,x_t \rangle=a+bi$ and  $\langle y,x_t\rangle=c+di$.  Note that $f_t$ is unchanged if we switch the roles of $x$ and $y$ or multiply $x$ and $y$ by the same unimodular scalar.  Thus, we may assume without loss of generality that $b=-d$ and that $a\geq 0$ and $a\geq |c|$.  This gives that 
\begin{align*}
f_t(r)&=\big||a+bi-r(a+c)|-|c-bi-r(a+c)|\big|\\
&=\big|((a-r(a+c))^2+b^2)^{1/2}-((c-r(a+c))^2+b^2)^{1/2}\big|\\
&=(a-r(a+c))^2+b^2)^{1/2}-((c-r(a+c))^2+b^2)^{1/2}\hspace{.5cm}\textrm{ as $a\geq|c|$ and $0\leq r\leq 1/2$}.\\
\end{align*}
We first consider the case where $b=0$ and $r(a+c)\leq c< a$. Then,
$$f_t(r)=|a-r(a+c)|-|c-r(a+c)|=(a-r(a+c))-(c-r(a+c)=a-c=f(0).$$
Thus, $f_t$ is constant for those values of $r$.  We now consider the case where $b=0$ and $c<r(a+c)< a$.  Then,
 $$f_t(r)=|a-r(a+c)|-|c-r(a+c)|=a-r(a+c)+(c-r(a+c))=(a+c)(1-2r).$$
Thus $f_t$ is decreasing as $a\geq|c|$.  This proves that $f_t$ is a decreasing function when $b=0$.  We now assume that $b\neq 0$.  Note that $f_t(0)\geq0$ and $f_t(1/2)=0$.  In this case, one can check that $f_t$ is a differentiable function and 
$f_t$ has no critical point in the interval $(0,1/2)$.  Thus, $f_t$ is decreasing on $[0,1/2]$.  As $f_t$ is decreasing, we have that 
$f_t(0)\geq f_t(R)$.  Hence,
\begin{equation}\label{E:dec1}
\big||\langle x,x_t\rangle|-|\langle y,x_t\rangle|\big|\geq \big||\langle x-R(x+y),x_t\rangle|-|\langle y-R(x+y),x_t\rangle|\big|\hspace{.5cm}\textrm{ for all }t\in\Omega.
\end{equation}
Thus, we have that 
\begin{equation}\label{E:dec2}
 \big\||\Theta x|-|\Theta y|\big\|\geq\big\||\Theta (x-R(x+y))|-|\Theta (y-R(x+y))|\big\|
\end{equation}
By \eqref{E:same} and \eqref{E:dec2} we have that
\begin{equation}\label{E:dec}
\frac{\big\||\Theta x|-|\Theta y|\big\|}{\min_{|\lambda|=1}\|x-\lambda y\|}\geq \frac{\big\||\Theta (x-R(x+y))|-|\Theta (y-R(x+y))|\big\|}{\min_{|\lambda|=1}\|(x-R(x+y))-\lambda (y-R(x+y))\|}
\end{equation}
Without loss of generality, we may assume that $\|x-R(x+y)\|\geq \|x-R(x+y)\|$.  We let $x_o=(x-R(x+y))/\|x-R(x+y)\|$ and $y_o=(y-R(x+y))/\|x-R(x+y)\|$.  Then $x_o$ and $y_o$ are orthogonal, $\|x_o\|=1$, and $\|y_o\|\leq 1$.  Furthermore,  \eqref{E:dec} implies that
$$\frac{\big\||\Theta x|-|\Theta y|\big\|}{\min_{|\lambda|=1}\|x-\lambda y\|}\geq \frac{\big\||\Theta x_o|-|\Theta y_o|\big\|}{\min_{|\lambda|=1}\|x_o-\lambda y_o\|}=\frac{\big\||\Theta x_o|-|\Theta y_o|\big\|}{(1+\|y_o\| |^2)^{1/2}}.
$$
\end{proof}

We now discuss some consequences of Theorem \ref{T:ortho}.
  We define a function $\Psi$ on a subset of $H\times H$ by 
\begin{equation}\label{E:psi}
    \Psi(x,y)=\frac{\||\Theta x|-|\Theta y|\|}{\min_{|\lambda|=1}\|x-\lambda y\|}\hspace{1cm}\textrm{ for all $x,y\in H$ with $x\neq \lambda y$ for all $|\lambda|=1$.}
\end{equation}
The optimal value $C$ for which $(x_t)_{t\in\Omega}$ does $C$-stable phase retrieval is then given by $C=(\inf \Psi(x,y))^{-1}$.  The problem with this approach is that  the infimum is being taken over a non-compact set. 
However, by Theorem \ref{T:ortho} we only need to consider $\Psi(x,y)$ for orthogonal vectors $x,y\in H$.  This gives the following immediate corollary.

\begin{cor}\label{C:2.2}
Let $(x_t)_{t\in\Omega}$ be a continuous frame for a Hilbert space $H$ with analysis operator $\Theta:H\rightarrow L_2(\Omega)$.  Let $X\subseteq H\oplus H$ be defined by 
$$X=\{(x,y)\in H\times H: \|x\|=1,\|y\|\leq 1,\textrm{ and }\langle x,y\rangle =0.\}.
$$
Then the following all hold.
\begin{enumerate}
    \item $(x_t)_{t\in\Omega}$ does stable phase retrieval if and only if $\inf_{(x,y)\in X}\Psi(x,y)>0$. 
    \item If $(x_t)_{t\in\Omega}$ does stable phase retrieval then the smallest value $C$ so that $(x_t)_{t\in\Omega}$ does $C$-stable phase retrieval is given by $C=(\inf_{(x,y)\in X}\Psi(x,y))^{-1}$.
    \item The function $\Psi$ is continuous on $X$ and is given by,
        $$\Psi(x,y)=\frac{\||\Theta x|-|\Theta y|\|}{(1+\|y\|^2)^{1/2}}\hspace{1cm}\textrm{ for all $(x,y)\in X$.}
    $$
    
    \item If $H$ is finite dimensional then $X\subseteq H\oplus H$ is a compact set.
\end{enumerate}
\end{cor}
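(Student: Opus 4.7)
The plan is to prove the four parts in the order (3), (1), (2), (4), since the explicit formula in (3) clarifies the role of $X$ in the remaining parts.

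For (3), fix $(x,y)\in X$. Because $\langle x,y\rangle=0$, expanding gives
\[
\|x-\lambda y\|^2=\|x\|^2-2\,\mathrm{Re}(\lambda\langle y,x\rangle)+|\lambda|^2\|y\|^2=1+\|y\|^2
\]
for every $|\lambda|=1$, so the minimum over $\lambda$ is $(1+\|y\|^2)^{1/2}$. Note that orthogonality together with $\|x\|=1$ forces $x\neq\lambda y$ for all $|\lambda|=1$, so $\Psi$ is well-defined on the entire set $X$. Continuity then follows because the denominator $(1+\|y\|^2)^{1/2}$ is continuous and bounded below by $1$, while the numerator $\||\Theta x|-|\Theta y|\|$ is continuous as a composition of the bounded linear map $\Theta$, the $1$-Lipschitz map $f\mapsto|f|$ on $L_2(\Omega)$, and the $L_2$-norm.

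For (1) and (2), the key input is Theorem \ref{T:ortho}. If $(x,y)\in H\times H$ with $x\not\sim y$, the theorem produces $(x_o,y_o)\in X$ with $\Psi(x,y)\ge\Psi(x_o,y_o)$. Hence
\[
\inf\{\Psi(x,y):x\not\sim y\}=\inf_{(x,y)\in X}\Psi(x,y).
\]
Since $C$-stable phase retrieval is by definition equivalent to $\Psi(x,y)\ge C^{-1}$ for all non-equivalent $(x,y)$, both the qualitative statement in (1) and the sharp quantitative statement in (2) are immediate consequences.

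Finally, for (4), in finite dimensions $X$ lies in the closed ball of $H\oplus H$ of radius $\sqrt{2}$ and is cut out by the closed conditions $\|x\|=1$, $\|y\|\le 1$, and $\langle x,y\rangle=0$, so it is closed and bounded, hence compact by Heine--Borel. There is really no main obstacle in this corollary; every step reduces to a direct computation or an invocation of Theorem \ref{T:ortho}. The only subtle point is confirming that $\Psi$ extends continuously to all of $X$, which is handled by the observation that orthogonality with $\|x\|=1$ rules out $x\sim y$.
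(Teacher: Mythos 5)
Your proof is correct and follows exactly the route the paper intends: the paper states Corollary \ref{C:2.2} as an immediate consequence of Theorem \ref{T:ortho}, and your write-up simply supplies the routine details (the computation $\|x-\lambda y\|^2=1+\|y\|^2$ on $X$, the equality of the two infima via Theorem \ref{T:ortho}, continuity of $\Psi$, and Heine--Borel for compactness). Nothing is missing or different in substance.
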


It is well known that  a continuous frame for a finite dimensional Hilbert space does phase retrieval if and only if it does $C$-stable phase retrieval for some constant $C$.  This fundamental result in the mathematics of phase retrieval has been proven in multiple papers using a variety of methods \cite{AG,BCMN,BW,CCD}.   We now apply Theorem \ref{T:ortho} and Corollary \ref{C:2.2} to give a simple and natural proof.

\begin{cor}
 Let $(x_t)_{t\in\Omega}$ be a continuous frame for a finite dimensional Hilbert space $H$.  Then $(x_t)_{t\in\Omega}$ does phase retrieval if and only if $(x_t)_{t\in\Omega}$ does $C$-stable phase retrieval for some constant $C$.
\end{cor}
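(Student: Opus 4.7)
The plan is to deduce this corollary directly from Theorem \ref{T:ortho} and Corollary \ref{C:2.2}. The nontrivial direction is to show that phase retrieval implies $C$-stable phase retrieval for some $C$, so I focus on that. By Corollary \ref{C:2.2}(1) it suffices to prove that $\inf_{(x,y)\in X}\Psi(x,y) > 0$, where
$$X = \{(x,y)\in H\times H : \|x\|=1,\ \|y\|\le 1,\ \langle x,y\rangle = 0\}.$$

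Here I would exploit the three key features packaged in Corollary \ref{C:2.2}: (i) since $H$ is finite dimensional, $X$ is compact; (ii) on $X$, the function $\Psi$ admits the explicit expression $\Psi(x,y) = \||\Theta x|-|\Theta y|\|/(1+\|y\|^2)^{1/2}$, which is continuous on all of $X$ because the denominator is bounded below by $1$ and the numerator depends continuously on $(x,y)$ (via the continuous linear map $\Theta$, the pointwise absolute value, and the $L_2$ norm); and (iii) this reformulation is licensed by Theorem \ref{T:ortho}, which lets us restrict the search for a stability constant to orthogonal pairs.

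Next I would verify that $\Psi$ is strictly positive on $X$ under the assumption of phase retrieval. Suppose to the contrary that $\Psi(x,y) = 0$ for some $(x,y) \in X$. Then $|\Theta x| = |\Theta y|$ in $L_2(\Omega)$, so phase retrieval forces $x = \lambda y$ for some $|\lambda|=1$. The orthogonality relation $\langle x,y\rangle = 0$ combined with $\|x\|=1$ then gives $0 = \langle \lambda y, y\rangle = \lambda\|y\|^2$, hence $y = 0$, and therefore $x = 0$, contradicting $\|x\|=1$. Thus $\Psi > 0$ on $X$, and by compactness of $X$ and continuity of $\Psi$, the infimum is attained and is strictly positive. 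Corollary \ref{C:2.2}(1)--(2) then yields $C$-stable phase retrieval with $C = (\inf_X \Psi)^{-1}$.

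For the converse, $C$-stable phase retrieval implies phase retrieval immediately: if $|\Theta x| = |\Theta y|$ then $\min_{|\lambda|=1}\|x-\lambda y\| \le C \cdot 0 = 0$, so $x = \lambda y$ for some unimodular $\lambda$. The only mild obstacle in the whole argument is checking that $\Psi$ genuinely extends continuously across the locus where $y=0$ (the ratio in its original definition \eqref{E:psi} could in principle be delicate there), but the alternative formula from Corollary \ref{C:2.2}(3) handles this cleanly, so there is no real difficulty to overcome.
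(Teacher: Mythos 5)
Your proof is correct and follows essentially the same route as the paper: reduce to orthogonal pairs via Theorem \ref{T:ortho} and Corollary \ref{C:2.2}, observe that phase retrieval forces $\Psi>0$ on the compact set $X$ (you argue by contradiction where the paper argues directly that $(x,y)\in X$ implies $x\neq\lambda y$, but these are the same observation), and conclude by continuity and compactness. Your added remarks on the trivial converse and on the continuity of $\Psi$ at $y=0$ are fine but not a departure from the paper's argument.
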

\begin{proof}
Suppose that $(x_t)_{t\in\Omega}$ does phase retrieval on $H$. 
Note that if $(x,y)\in X$ then $x\neq \lambda y$ for all $|\lambda|=1$ as $\|x-\lambda y\|^2=1+\|y\|^2$.  Thus, 
 for all $(x,y)\in X$ we have that $|\Theta x|\neq |\Theta y|$ as $(x_t)_{t\in\Omega}$ does phase retrieval on $H$. Hence, $\Psi(x,y)>0$ for all $(x,y)\in X$.  As $\Psi$ is a continuous function on the compact set $X$ we have that $\min_{(x,y)\in X}\Psi(x,y)>0$.  Thus, $(x_t)_{t\in\Omega}$ does $C$-stable phase retrieval for $C=(\min_{(x,y)\in X}\Psi(x,y))^{-1}$.
\end{proof}

\section{Stable phase retrieval near $x\in\R^n$}\label{S:R}

Let $(x_j)_{j\in J}$ be a frame of $\R^n$ with analysis operator $\Theta:\R^n\rightarrow \ell_2(J)$.  By Theorem \ref{T:ortho}, we have that if $(x_j)_{j\in J}$ does phase retrieval then the worst stability bounds occur at orthogonal vectors.  
Furthermore, it was known that if $(x_j)_{j\in J}$ does not do phase retrieval then there are orthogonal vectors $x,y\in\R^n$ such that $|\Theta x|=|\Theta y|$ \cite{BCE}.
We now show that very good stability bounds for phase retrieval can be obtained if we fix $x\in\R^n$ and only consider $y$ in a small neighborhood of $x$.  In particular, even if $(x_j)_{j\in J}$  fails to do phase retrieval for $\R^n$, we still have that $(x_j)_{j\in J}$ does phase retrieval near $x\in\R^n$.

\begin{prop}\label{P:frame}
Let $n\in\N$ and let $(x_j)_{j\in J}$ be a finite frame of $\R^n$ with lower frame bound $A$ and analysis operator $\Theta:\R^n\rightarrow \ell_2(\Theta)$.  Let $x\in\R^n$ and $J_x=\{j\in J:\langle x,x_j\rangle\neq 0\}$.  Then for all $y\in\R^n$ with $\|x-y\|\leq \min_{j\in J_x}|\langle x,x_j/\|x_j\|\rangle|$ we have that $\min_{|\lambda|=1}\|x-\lambda y\|\leq A^{-1/2}\| |\Theta(x)|-|\Theta(y)|\|$.
In particular, $(x_j)_{j\in J}$ does $A^{-1/2}$-stable phase retrieval near $x$.
\end{prop}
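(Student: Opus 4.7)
The plan is to exploit two special features of $\R^n$: the only unimodular scalars are $\pm1$, and the hypothesis on $\|x-y\|$ forces the signs of the frame coefficients of $y$ to match those of $x$ on $J_x$. Once the signs align, the nonlinear quantity $\bigl||\langle x,x_j\rangle|-|\langle y,x_j\rangle|\bigr|$ collapses to the linear $|\langle x-y,x_j\rangle|$, after which the lower frame bound finishes the job.

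Concretely, I would first fix $j\in J_x$ and apply Cauchy--Schwarz together with the hypothesis to get
$$|\langle x-y,x_j\rangle|\leq \|x-y\|\,\|x_j\|\leq \bigl|\langle x,x_j/\|x_j\|\rangle\bigr|\,\|x_j\|=|\langle x,x_j\rangle|.$$
Hence $\langle y,x_j\rangle=\langle x,x_j\rangle-\langle x-y,x_j\rangle$ has the same sign as $\langle x,x_j\rangle$, which yields $\bigl||\langle x,x_j\rangle|-|\langle y,x_j\rangle|\bigr|=|\langle x-y,x_j\rangle|$ for $j\in J_x$. For $j\notin J_x$ we have $\langle x,x_j\rangle=0$ and the identity holds trivially. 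Squaring and summing,
$$\bigl\||\Theta x|-|\Theta y|\bigr\|^2=\sum_{j\in J}|\langle x-y,x_j\rangle|^2=\|\Theta(x-y)\|^2\geq A\,\|x-y\|^2$$
by the lower frame bound.

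Next I would identify $\min_{|\lambda|=1}\|x-\lambda y\|$ with $\|x-y\|$. Over $\R$ this minimum is taken over $\lambda\in\{-1,+1\}$. Applying Cauchy--Schwarz to the unit vector $x_j/\|x_j\|$ shows $\bigl|\langle x,x_j/\|x_j\|\rangle\bigr|\leq \|x\|$, so the hypothesis gives $\|x-y\|\leq \|x\|$. The triangle inequality $\|x+y\|+\|x-y\|\geq 2\|x\|$ then forces $\|x+y\|\geq\|x\|\geq\|x-y\|$, so the minimizing sign is $+1$. Combining with the previous paragraph yields
$$\min_{|\lambda|=1}\|x-\lambda y\|=\|x-y\|\leq A^{-1/2}\bigl\||\Theta x|-|\Theta y|\bigr\|,$$
which is the claimed inequality. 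The ``stable phase retrieval near $x$'' assertion follows by dividing: the ratio $\bigl\||\Theta x|-|\Theta y|\bigr\|/\min_{|\lambda|=1}\|x-\lambda y\|$ is bounded below by $A^{1/2}$ uniformly for $y$ in the specified neighborhood, and this passes to the limit $y\to x$ with $C=A^{-1/2}$.

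I do not anticipate a serious obstacle; the argument is essentially sign-matching combined with the lower frame bound. The one edge case worth flagging is $x=0$, in which $J_x=\emptyset$ and the minimum over an empty index set is interpreted as $+\infty$, making the hypothesis vacuous. In that case the conclusion reduces directly to the lower frame bound applied to $y$, namely $\|y\|\leq A^{-1/2}\|\Theta y\|=A^{-1/2}\bigl\||\Theta 0|-|\Theta y|\bigr\|$.
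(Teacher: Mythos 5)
Your proposal is correct and follows essentially the same route as the paper: the hypothesis forces $\sign\langle y,x_j\rangle=\sign\langle x,x_j\rangle$ on $J_x$ (allowing $\langle y,x_j\rangle=0$ in the boundary case, which still gives $\bigl||\langle x,x_j\rangle|-|\langle y,x_j\rangle|\bigr|=|\langle x-y,x_j\rangle|$), after which $\||\Theta x|-|\Theta y|\|=\|\Theta(x-y)\|\geq A^{1/2}\|x-y\|$. Your additional step identifying $\lambda=+1$ as the actual minimizer is harmless but unnecessary, since $\min_{|\lambda|=1}\|x-\lambda y\|\leq\|x-y\|$ trivially; your explicit treatment of the $x=0$ edge case is a small improvement over the paper, which simply assumes $x\neq0$.
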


\begin{proof}
Let $x \in R^n$ with $x \neq 0$ and $\beta = \min_{j\in J_x}|\langle x,x_j/\|x_j\|\rangle| $.  Let $y \in R^n$ with $\|x-y\| < \beta $. We first show that $sign(\langle x,x_j\rangle) =sign(\langle y,x_j\rangle) $ for all $j\in J_x$. Let $j\in J_x$ and without loss of generality assume that $\langle x,x_j\rangle >0$. We have that
$$\langle y,x_j\rangle= \langle x,x_j\rangle +\langle y-x,x_j\rangle  \geq \langle x,x_j\rangle -\|x-y\|\|x_j\|  >  \langle x,x_j\rangle - \frac{| \langle x,x_j\rangle|}{\|x_j\|}\|x_j\| = 0.$$ 
Thus, $sign(\langle x,x_j\rangle) =sign(\langle y,x_j\rangle) $ for $j\in J_x$.
We now have that

\begin{align*}
  \big\| &|\Theta(x)|-|\Theta(y)|\big\|^2_{l_2(J)}= \sum_{j\in J}\big||\langle x,x_j\rangle|-|\langle y,x_j\rangle|\big|^2\\
  &=\sum_{j\in J_x}\big||\langle x,x_j\rangle|-|\langle y,x_j\rangle|\big|^2 + \sum_{j \notin J_x}|\langle y,x_j\rangle|^2 \hspace{.5cm}\textrm{ as $\langle x,x_j\rangle=0$ for $j\not\in J_x$},\\
  &=\sum_{j\in J_x}|\langle x,x_j\rangle-\langle y,x_j\rangle|^2 + \sum_{j \notin J_x}|\langle y,x_j\rangle|^2\hspace{.5cm}\textrm{ as }\textrm{$sign(\langle x,x_i\rangle) =sign(\langle y,x_j\rangle) $ for $j\in J_x$,}\\
  &=\sum_{j\in J_x}|\langle x-y,x_j\rangle|^2 + \sum_{j \notin J_x}|\langle x-y,x_j\rangle|^2\hspace{.5cm}\textrm{ as $\langle x,x_j\rangle=0$ for $j\not\in J_x$},\\
  &=\sum_{j\in J}|\langle x-y,x_j\rangle|^2 \geq A \|x-y\|^2\hspace{.5cm}\textrm{ as $(x_j)_{j\in J}$ has lower frame bound $A$.}
\end{align*}

\end{proof}

Note that Proposition \ref{P:frame} implies in particular that every ortho-normal basis for $\R^n$ does $1$-stable phase retrieval near $x\in\R^n$ for every $x\in\R^n$.  This is notable in that for a frame to do phase retrieval for $\R^n$, the frame must have at least $2n-1$ vectors.  The following theorem extends Proposition \ref{P:frame} to continuous frames for $\R^n$.

\begin{thm}\label{T:cframe}
Let $n\in\N$ and let $(x_t)_{t\in\Omega}$ be a continuous frame for $\R^n$ with lower frame bound $A$. Then for all $ x\in \R^n$, the continuous frame $(x_t)_{t\in\Omega}$ does $A^{-1/2}$-stable phase retrieval near $x $.
\end{thm}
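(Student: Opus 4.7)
My approach will be a direct limiting argument rather than an attempt to transfer the explicit neighborhood size from Proposition~\ref{P:frame}. The plan has three pieces: reduce $\min_{|\lambda|=1}\|x-\lambda y\|$ to $\|x-y\|$, establish pointwise convergence of the normalized integrand, and apply Fatou's lemma with the lower frame bound. First I would handle $x=0$ trivially from $A\|y\|^2\leq\||\Theta y|\|^2$. For $x\neq 0$, I would note that $\|y-x\|<\|x\|$ implies $\|x+y\|\geq 2\|x\|-\|y-x\|>\|x\|>\|x-y\|$, so $\min_{|\lambda|=1}\|x-\lambda y\|=\|x-y\|$. It will therefore suffice to show
$$
\liminf_{y\to x,\,y\neq x}\frac{\||\Theta x|-|\Theta y|\|^2}{\|x-y\|^2}\ \geq\ A.
$$

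To prove this liminf bound I would take an arbitrary sequence $y_n\to x$ with $y_n\neq x$ and set $h_n=y_n-x$, $v_n=h_n/\|h_n\|$. Compactness of the unit sphere of $\R^n$ lets me pass to a subsequence with $v_n\to v$, $\|v\|=1$. The key step is the pointwise limit
$$
\phi_n(t)\ :=\ \frac{\big||\langle x,x_t\rangle|-|\langle y_n,x_t\rangle|\big|^2}{\|h_n\|^2}\ \longrightarrow\ |\langle v,x_t\rangle|^2
$$
for every $t\in\Omega$, which I would verify by a case split: if $\langle x,x_t\rangle=0$ then the numerator equals $|\langle h_n,x_t\rangle|^2$ exactly, giving $\phi_n(t)=|\langle v_n,x_t\rangle|^2\to|\langle v,x_t\rangle|^2$; if $\langle x,x_t\rangle\neq 0$, then $\langle y_n,x_t\rangle\to\langle x,x_t\rangle$ forces eventual sign agreement, so $\big||\langle x,x_t\rangle|-|\langle y_n,x_t\rangle|\big|=|\langle h_n,x_t\rangle|$ for all large $n$ and the same limit holds. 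This is the continuous analogue of the sign-agreement argument used in Proposition~\ref{P:frame}, but applied only in the limit so that no uniform neighborhood is required.

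Since $\phi_n\geq 0$, Fatou's lemma and the lower frame bound will then give
$$
\liminf_{n\to\infty}\int_\Omega\phi_n(t)\,d\mu(t)\ \geq\ \int_\Omega|\langle v,x_t\rangle|^2\,d\mu(t)\ \geq\ A\|v\|^2=A.
$$
Because every sequence $y_n\to x$ admits a further subsequence along which this holds, the full liminf as $y\to x$ of $\||\Theta x|-|\Theta y|\|^2/\|x-y\|^2$ is at least $A$, which is the required estimate. The main subtlety to watch for is the absence of a dominating function uniform in $n$: the natural envelope $\phi_n(t)\leq|\langle v_n,x_t\rangle|^2$ varies with $n$ and is not controlled by a single $\mu$-integrable function on $\Omega$, so dominated convergence is unavailable. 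Nonnegativity of $\phi_n$ together with Fatou's lemma is precisely what sidesteps this issue and lets the lower frame bound pass through the limit cleanly.
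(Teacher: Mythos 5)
Your proof is correct, but it takes a genuinely different route from the paper's. The paper argues quantitatively and uniformly: for each $\vp>0$ it picks $\beta>0$ so that the restricted analysis operator on the complement of $\Omega_\beta=\{t:|\langle x,x_t\rangle|\ge\beta\|x_t\|\}$ satisfies $\|\Theta_{\Omega_\beta^c}\|^2<\vp$, verifies the sign agreement $\sign(\langle x,x_t\rangle)=\sign(\langle y,x_t\rangle)$ for all $t\in\Omega_\beta$ whenever $\|x-y\|<\beta$ (exactly as in Proposition \ref{P:frame}), and concludes $\big\||\Theta x|-|\Theta y|\big\|^2>(A-\vp)\|x-y\|^2$ on a concrete ball around $x$. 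You instead normalize the increment, use compactness of the unit sphere of $\R^n$ to extract a limiting direction $v$, prove pointwise convergence of the normalized integrand (sign agreement only in the limit, separately for each fixed $t$), and finish with Fatou's lemma. The paper's argument buys an explicit neighborhood on which the stability inequality holds with constant $(A-\vp)^{-1/2}$; your argument buys the fact that you never have to estimate the norm of a restricted analysis operator, and it handles the set $\{t:\langle x,x_t\rangle=0\}$ exactly (there the identity $\big||\langle x,x_t\rangle|-|\langle y,x_t\rangle|\big|=|\langle x-y,x_t\rangle|$ needs no limiting argument), whereas the paper's step ``$\Omega=\cup_{\alpha>0}\Omega_\alpha$, hence $\|\Theta_{\Omega_\beta^c}\|^2<\vp$'' tacitly treats that set as carrying negligible frame energy, so the Fatou route is arguably the more robust of the two on this point. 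One small remark: your reduction of $\min_{|\lambda|=1}\|x-\lambda y\|$ to $\|x-y\|$ for $y$ near $x\neq0$ is correct but not actually needed, since the minimum is always at most $\|x-y\|$ and the desired inequality already points the right way; the $x=0$ case, the subsequence extraction, and the Fatou step are all sound.
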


\begin{proof}

Let   $x\in \R^n $ and $\vp>0$.  For all $\alpha> 0$ we denote $\Omega_{\alpha}= \{t \in \Omega: |\langle x,x_t\rangle|\geq \alpha \|x_t\| \}$.   Let $\Theta$ be the analysis operator of $(x_t)_{t \in \Omega}$ and let $\Theta_{\Omega^c_{\alpha}}$ be the analysis operator of $(x_t)_{t \in \Omega^c_{\alpha}}$. As $\Omega = \cup_{\alpha> 0}\Omega_{\alpha} $ there exists $\beta > 0$ so that if $0 < \alpha \leq \beta $ then $\|\Theta_{\Omega^c_{\beta}}\|^2< \epsilon $.
Let  $ y\in \R^n$ with $\|x-y\|< \beta$. As in the proof of proposition 2.1 we have that  
 $sign(\langle x,x_t\rangle) =sign(\langle y,x_t\rangle)  $ for all $t \in \Omega_{\beta} $. Then,

\begin{align*}
\big\| |\Theta(x) |- |\Theta(y) | \big\|_{L_2(\Omega)} ^2  &=   \int_{\Omega}    \big||\langle x ,x_t\rangle -|\langle y ,x_t\rangle | \big|^2  d\mu.\\
    &\geq   \int_{{\Omega}_{\beta}}    \big||\langle x ,x_t\rangle -|\langle y ,x_t\rangle | \big|^2  d\mu\\ 
&= \int_{{\Omega}_{\beta}}    |\langle x-y ,x_t\rangle |^2  d\mu \hspace{.5cm}\textrm{ as }\textrm{$sign(\langle x,x_t\rangle) =sign(\langle y,x_t\rangle) $ for $t\in \Omega_\beta$,}\\
&= \int_{\Omega}    |\langle x-y ,x_t\rangle |^2  d\mu - \int_{{\Omega}^c_{\beta}}    |\langle x-y ,x_t\rangle |^2  d\mu\\
&= \|\Theta(x-y)\|^2-\|\Theta_{\Omega^c_{\beta}}(x-y)\|^2\\
&\geq A\|x-y\|^2-\|\Theta_{\Omega^c_{\beta}}\|^2\|x-y\|^2\\
&> (A-\epsilon)\|x-y\|^2
\end{align*}
 Thus, 
 \begin{align*}
  \\ \\ A^{-1/2} \lim_{y\rightarrow x}\frac{ \| |T(x)|-|T(y) | \|}{\min_{|\lambda |=1} \| x-\lambda y\|} \geq 1  
 \end{align*}
This prove that $(x_t)_{t\in \Omega}$ does $A^{-1/2}$-stable phase retrieval near $x$.

\end{proof}

\section{Stable phase retrieval near $x\in\C^n$}\label{S:C}

We have that a continuous frame for $\R^n$ does stable phase retrieval near $x$ for every $x\in\R^n$.  However, the situation is completely different for $\C^n$.  Indeed, although an ortho-normal basis for $\R^n$  does $1$-stable phase retrieval near $x\in\R^n$ for every $x\in\R^n$, we will show that the opposite holds for $\C^n$.  That is, if $(e_j)_{j=1}^n$ is a basis for  $\C^n$ and $x\in\C^n$ is not a scalar multiple of $e_j$ for some $1\leq j\leq n$ then $(e_j)_{j=1}^n$ does not do stable phase retrieval near $x$.

In order to prove that a continuous frame $(x_t)_{t\in\Omega}$ does not do stable phase retrieval near a vector $x\in\C^n$, we will find a vector $y\in\C^n$ and prove that 
\begin{equation}\label{E:0}
\lim_{\alpha\rightarrow 0} \frac{\||\Theta x|-|\Theta (x+\alpha y)|\|}{\min_{|\lambda|=1}\|x-\lambda(x+\alpha y)\|}=0.
\end{equation}

Proving \eqref{E:0} will involve showing that the numerator converges to $0$ at a faster rate than the denominator.  We first show that the denominator always converges at a linear rate.

\begin{lem}\label{L:denom}
Let $H$ be a Hilbert space and let $x,y\in H$ be linearly independent vectors.  Then there exists a constant $c>0$ so that for all scalars $\alpha\in\R$,
$$\min_{|\lambda|=1}\|x-\lambda(x+\alpha y)\|\geq |\alpha|c.
$$
\end{lem}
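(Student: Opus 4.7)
The plan is to exploit linear independence to find a fixed linear functional that vanishes on $x$ but not on $y$, so that taking inner product with the corresponding vector kills the $x$-contribution in $x - \lambda(x+\alpha y) = (1-\lambda)x - \lambda\alpha y$ and isolates a term of size exactly $|\alpha|$. This will give a lower bound by Cauchy--Schwarz that is uniform in $\lambda$ with $|\lambda|=1$.

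Concretely, I would decompose $y = \gamma x + y^\perp$ where $\gamma = \langle y, x\rangle/\|x\|^2$ and $y^\perp \perp x$. Since $x,y$ are linearly independent, we have $y^\perp \neq 0$, and I set $c := \|y^\perp\| > 0$. For any scalar $\lambda$ with $|\lambda|=1$ and any $\alpha \in \mathbb{R}$, compute
\begin{equation*}
\bigl\langle (1-\lambda)x - \lambda\alpha y,\ y^\perp\bigr\rangle
= (1-\lambda)\langle x, y^\perp\rangle - \lambda\alpha\langle y, y^\perp\rangle
= -\lambda\alpha\|y^\perp\|^2,
\end{equation*}
since $\langle x, y^\perp\rangle = 0$ and $\langle y, y^\perp\rangle = \|y^\perp\|^2$. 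Taking absolute values gives $|\alpha|\,\|y^\perp\|^2$, and Cauchy--Schwarz yields
\begin{equation*}
\|x - \lambda(x+\alpha y)\|\cdot\|y^\perp\| \geq |\alpha|\,\|y^\perp\|^2,
\end{equation*}
so $\|x - \lambda(x+\alpha y)\| \geq |\alpha|\,\|y^\perp\| = |\alpha| c$. Since this holds for every $|\lambda|=1$, the same bound holds for the minimum, which is the desired conclusion.

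There is essentially no obstacle here; the content is entirely the observation that $\lambda$ appears as a unimodular prefactor on the $y$-term, so projecting onto the component of $y$ orthogonal to $x$ extracts a quantity whose modulus does not depend on $\lambda$ at all. The only thing worth a brief comment in the write-up is why $y^\perp \neq 0$: this is exactly the hypothesis of linear independence, because $y^\perp = 0$ would force $y \in \mathrm{span}\{x\}$. The constant $c = \|y^\perp\|$ depends only on $x,y$ and not on $\alpha$ or $\lambda$, which is what the statement requires.
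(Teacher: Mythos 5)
Your proof is correct and is essentially the same as the paper's: the paper applies the orthogonal projection $P_{(\mathrm{span}\,x)^\perp}$ to $x-\lambda(x+\alpha y)=(1-\lambda)x-\lambda\alpha y$ and uses that projections are contractions, which yields exactly your constant $c=\|y^\perp\|=\|P_{(\mathrm{span}\,x)^\perp}(y)\|$. Your pairing against $y^\perp$ followed by Cauchy--Schwarz is just the one-dimensional instance of that same projection bound, so there is no substantive difference.
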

\begin{proof}
Let $P_{(span(x))^\perp}$ be orthogonal projection onto the orthogonal complement of the span of $x$.  We have that
$$\|x-\lambda(x+\alpha y)\|\geq \|P_{(span(x))^\perp}(x-\lambda(x+\alpha y))\|=|\alpha |\|P_{(span(x))^\perp}(y)\|$$
As $x$ and $y$ are linearly independent, we have that $\|P_{(span(x))^\perp}(y)\|\neq0$.
\end{proof}

\begin{thm}\label{T:onb}
Let $(e_j)_{j=1}^n$ be a basis of $\C^n$.  If $x\in\C^n$ is not a scalar multiple of $e_j$ for all $1\leq j\leq n$ then $(e_j)_{j=1}^n$ does not do stable phase retrieval near $x$.
\end{thm}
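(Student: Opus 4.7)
My plan is to produce, for each small real $\alpha \neq 0$, a perturbation direction $y$ so that the ratio
$$\frac{\big\||\Theta x| - |\Theta(x+\alpha y)|\big\|}{\min_{|\lambda|=1}\|x - \lambda(x+\alpha y)\|}$$
tends to $0$ as $\alpha\to 0$. The sequence $y_\alpha := x + \alpha y$ then converges to $x$ and witnesses the failure of $C$-stable phase retrieval near $x$ for every finite $C$.

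Write $a_j = \langle x, e_j\rangle$. Guided by the label \texttt{T:onb}, I interpret $(e_j)_{j=1}^n$ as an orthonormal basis; under orthonormality, the hypothesis that $x$ is not a scalar multiple of any $e_j$ is equivalent to $J_x := \{j : a_j \neq 0\}$ having at least two elements. Fix any $j_1 \in J_x$ and set $y = i a_{j_1} e_{j_1}$. Because $|J_x| \geq 2$, the vector $x$ has a nonzero coordinate outside $\{j_1\}$, so $x$ is not a scalar multiple of $e_{j_1}$, and therefore $x$ and $y$ are linearly independent. Lemma \ref{L:denom} then supplies a constant $c > 0$ with $\min_{|\lambda|=1}\|x - \lambda(x+\alpha y)\| \geq c|\alpha|$ for every real $\alpha$.

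For the numerator, orthonormality yields $\langle x + \alpha y, e_j\rangle = a_j$ when $j \neq j_1$ and $\langle x + \alpha y, e_{j_1}\rangle = a_{j_1}(1 + i\alpha)$. Therefore $|\Theta(x+\alpha y)| - |\Theta x|$ is supported at the single index $j_1$, where it equals $|a_{j_1}|\big(\sqrt{1+\alpha^2} - 1\big) \leq \tfrac{1}{2}|a_{j_1}|\alpha^2$. Combining both estimates, the ratio above is at most $|a_{j_1}|\,\alpha/(2c)$, which tends to $0$ as $\alpha\to 0$. This contradicts the definition of $C$-stable phase retrieval near $x$ for any finite $C$.

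The heart of the computation is that multiplying $a_{j_1}$ by $1 + i\alpha$ is a pure phase rotation to first order, so the modulus $|a_{j_1}(1 + i\alpha)| = |a_{j_1}|\sqrt{1+\alpha^2}$ moves only at order $\alpha^2$, while Lemma \ref{L:denom} forces the denominator to drop only linearly. The step that requires the hypothesis is the linear-independence check: without at least two nonzero coordinates $a_j$ one would have $y \in \C\cdot x$, so $x + \alpha y$ would remain on the complex line through $x$ and the denominator would shrink at the same $\alpha^2$ rate as the numerator, killing the argument.
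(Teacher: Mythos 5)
Your argument is correct and is essentially the paper's argument: perturb $x$ in a direction that, to first order in $\alpha$, only rotates the phase of a frame coefficient, so that the numerator $\big\||\Theta x|-|\Theta(x+\alpha y)|\big\|$ is $O(\alpha^2)$ via $\sqrt{1+\alpha^2}-1\le\tfrac12\alpha^2$, while Lemma \ref{L:denom} bounds the denominator below by $c|\alpha|$. The paper rotates two coefficients in opposite directions, taking $y=i\langle x,e_1\rangle f_1-i\langle x,e_2\rangle f_2$; your single-coefficient choice is equally valid, and your linear-independence check (which is where the hypothesis of at least two nonzero coefficients enters) is fine.

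The one substantive discrepancy is that you prove only the orthonormal case, while the theorem is stated for an arbitrary basis of $\C^n$. The paper handles general bases by building $y$ from the biorthogonal system $(f_j)_{j=1}^n$ with $\langle f_i,e_j\rangle=\delta_{ij}$, rather than from the $e_j$ themselves; replacing your $y=i\langle x,e_{j_1}\rangle e_{j_1}$ by $y=i\langle x,e_{j_1}\rangle f_{j_1}$ makes your computation of $\Theta(x+\alpha y)$ go through verbatim for any basis, since all it uses is $\langle f_{j_1},e_j\rangle=\delta_{j_1 j}$. One further point: what both proofs actually need is that at least two frame coefficients $\langle x,e_j\rangle$ are nonzero. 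For an orthonormal basis this is equivalent to the stated hypothesis, as you observe; for a non-orthogonal basis a single nonzero coefficient means $x$ is a multiple of some $f_j$ (not necessarily of any $e_j$), so the equivalence you invoke fails and, indeed, the paper's own ``without loss of generality'' step has the same implicit orthogonality behind it. So as written you have established the orthonormal special case; the substitution $e_{j_1}\mapsto f_{j_1}$, together with the hypothesis phrased as ``at least two nonzero frame coefficients,'' upgrades it to the full claim.
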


\begin{proof}
Let $x\in\C^n$ such that $x$ is not a scalar multiple of $e_j$ for all $1\leq j\leq n$ and let $(f_j)_{j=1}^n$ be bi-orthogonal to $(e_j)_{j=1}^n$. Without loss of generality, we may assume that $\langle x,e_1\rangle\neq0$ and $\langle x,e_2\rangle\neq0$. The basis expansion of $x$ with respect to $(f_j)_{j=1}^n$ is given by  $x=\sum_{j=1}^n\langle x,e_j\rangle f_j$. Let $y=\langle x,e_1\rangle if_1-\langle x,e_2\rangle if_2$.  Let $\Theta $ be the analysis operator of $(e_j)_{j=1}^n$.  We will prove that $$
\lim_{\alpha\rightarrow 0} \frac{\| |\Theta(x)|-|\Theta(x+\alpha y)|\|}{\min_{|\lambda|=1}\|x-\lambda(x+\alpha y)\|}=0
$$
 We have that $\langle y, e_1\rangle=\langle x,e_1\rangle i$ and $\langle y, e_2\rangle=-\langle x,e_2\rangle i$.  Thus, $x$ and $y$ are linearly independent. By Lemma \ref{L:denom} there exists a constant $c>0$ so that
$$\min_{|\lambda|=1}\|x-\lambda(x+\alpha y)\|\geq |\alpha|c.\hspace{1cm}\textrm{ for all }\alpha\in\R.
$$ 
We will show that there exists $k>0$ so that $\| |\Theta(x)|-|\Theta(x+\alpha y)|\| \leq k\alpha^2$ for all $\alpha\in\R$.  By Taylor's Approximation Theorem, $(1+\alpha^2)^{1/2}\leq 1+ \frac{1}{2}\alpha^2$ for all $\alpha\in\R$.  We now obtain the following upper bound.
\begin{align*}
\big\| |\Theta(x)|-|\Theta(x+\alpha y)|\big\|^2&= \big||\langle x,e_1\rangle| - |\langle x,e_1\rangle+\alpha\langle x,e_1\rangle i|\big|^2+\big||\langle x,e_2\rangle| - |\langle x,e_2\rangle-\alpha\langle x,e_2\rangle i|\big|^2\\
&=(|\langle x,e_1\rangle|^2+|\langle x,e_2\rangle|^2)\big(1-(1+\alpha^2)^{1/2}\big)^2\\
&\leq (|\langle x,e_1\rangle|^2+|\langle x,e_2\rangle|^2) \frac{1}{4}\alpha^4
\end{align*}

Thus, we have that  
$$
\lim_{\alpha\rightarrow 0} \frac{\| |\Theta(x)|-|\Theta(x+\alpha y)|\|}{\min_{|\lambda|=1}\|x-\lambda(x+\alpha y)\|}\leq \lim_{\alpha\rightarrow 0} \frac{(|\langle x,e_1\rangle|^2+|\langle x,e_2\rangle|^2)^\frac{1}{2} \frac{1}{2} \alpha^2}{c|\alpha|} = 0 
$$

\end{proof}

We extend the ideas in the previous proof to prove the following theorem.  It is noted in \cite{CPT} in the discussion after Example 3 that if there exists orthogonal unit vectors $x,y\in\C^n$ such that the frame coefficients of both $x$ and $y$ are real then the frame does not do phase retrieval.  We now prove that this is true locally as well.

\begin{thm}\label{T:frameCn}
Let $n\geq 2$ and let  $(x_j)_{j\in J}$ be a finite frame of $\C^n$.
Suppose that $x,y\in \C^n$ are linearly independent and that $\langle x,x_j\rangle$ and $\langle y,x_j\rangle$ are both real for all $j\in J$.  Then there exists $z\in span\{x,y\}$ such that $(x_j)_{j\in J}$ does not do stable phase retrieval near $z$.
\end{thm}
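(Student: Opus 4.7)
The plan is to generalize the idea behind Theorem \ref{T:onb}: find $z\in\spn\{x,y\}$ and a perturbation direction $u\in\C^n$ so that every frame coefficient $\langle u,x_j\rangle$ is purely imaginary while every $\langle z,x_j\rangle$ remains real. Because real and imaginary parts combine orthogonally under the modulus, this arrangement cancels the first-order term in $\alpha$ of $|\langle z+\alpha u,x_j\rangle|$, so the numerator $\||\Theta z|-|\Theta(z+\alpha u)|\|$ should decay like $\alpha^2$, while Lemma \ref{L:denom} will keep the denominator $\min_{|\lambda|=1}\|z-\lambda(z+\alpha u)\|$ bounded below by a constant multiple of $|\alpha|$.

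Concretely, I would set $u=iy$, so that $\langle u,x_j\rangle=i\langle y,x_j\rangle\in i\R$ for every $j$. I would then choose $\beta\in\R$ avoiding the finite set $\{-\langle x,x_j\rangle/\langle y,x_j\rangle:\ j\in J,\ \langle y,x_j\rangle\neq 0\}$, and set $z=x+\beta y$. This choice of $\beta$ guarantees that whenever $\langle z,x_j\rangle=0$ one also has $\langle x,x_j\rangle=\langle y,x_j\rangle=0$, and hence $\langle u,x_j\rangle=0$ as well. The linear independence of $x$ and $y$ implies at once that $z$ and $u$ are linearly independent, so Lemma \ref{L:denom} produces a constant $c>0$ with $\min_{|\lambda|=1}\|z-\lambda(z+\alpha u)\|\ge c|\alpha|$ for every $\alpha\in\R$.

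For the numerator, at each $j\in J$ and each $\alpha\in\R$ the decomposition into real and imaginary parts gives
\[
|\langle z+\alpha u,x_j\rangle|^2=\langle z,x_j\rangle^2+\alpha^2\langle y,x_j\rangle^2,
\]
since $\langle z,x_j\rangle\in\R$ and $\langle\alpha u,x_j\rangle\in i\R$. When $\langle z,x_j\rangle\neq 0$, the estimate $\sqrt{1+t}-1\le t/2$ applied with $t=\alpha^2\langle y,x_j\rangle^2/\langle z,x_j\rangle^2$ yields
\[
\big||\langle z+\alpha u,x_j\rangle|-|\langle z,x_j\rangle|\big|\le\frac{\alpha^2\langle y,x_j\rangle^2}{2|\langle z,x_j\rangle|};
\]
when $\langle z,x_j\rangle=0$, the choice of $\beta$ forces $\langle y,x_j\rangle=0$ and the contribution of $j$ is $0$. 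Summing squared differences over the finite index set $J$ produces $\||\Theta z|-|\Theta(z+\alpha u)|\|\le K\alpha^2$ for a constant $K>0$, and combined with the linear lower bound on the denominator the ratio tends to $0$ as $\alpha\to 0$, which shows that $(x_j)_{j\in J}$ does not do stable phase retrieval near $z$.

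The main obstacle to guard against is the ``dangerous'' configuration where $\langle z,x_j\rangle=0$ but $\langle y,x_j\rangle\neq 0$: at such $j$, $|\langle z+\alpha u,x_j\rangle|=|\alpha||\langle y,x_j\rangle|$ is only linear in $\alpha$, which would destroy the quadratic control of the numerator and collapse the argument. The generic choice of $\beta$ in the second step is precisely what removes such indices, and this is where finiteness of the frame is used: only finitely many real values of $\beta$ must be excluded.
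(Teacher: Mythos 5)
Your proposal is correct and follows essentially the same route as the paper: perturb $z$ in the direction $iy$ so that the added frame coefficients are purely imaginary against the real coefficients of $z$, choose the real combination $z$ of $x$ and $y$ generically so that $\langle z,x_j\rangle\neq 0$ whenever $\langle y,x_j\rangle\neq 0$, bound the numerator by $O(\alpha^2)$ via the Taylor estimate, and use Lemma \ref{L:denom} for the linear lower bound on the denominator. The only differences are cosmetic ($z=x+\beta y$ versus the paper's $z=ax+y$, and your explicit identification of the excluded set of $\beta$'s and of the dangerous indices, which the paper handles with the same generic choice).
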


\begin{proof}
We may choose a non-zero real number $a\in\R$ so that for $z=ax+y$ we have for all $j\in J$  that if $\langle y,x_j\rangle\neq 0$ then
$\langle z,x_j\rangle\neq 0$.  Let $I=\{j\in J: \langle y,x_j\rangle\neq 0\}$.

 Let $\Theta $ be the analysis operator of $(x_j)_{j\in J}$.  We will prove that $$
\lim_{\alpha\rightarrow 0} \frac{\| |\Theta(z)|-|\Theta(z+\alpha y)|\|}{\min_{|\lambda|=1}\|z-\lambda(z+\alpha y)\|}=0
$$
 We have that $z$ and $y$ are linearly independent. By Lemma \ref{L:denom} there exists a constant $c>0$ so that for all $\alpha\in\R$,
$$\min_{|\lambda|=1}\|z-\lambda(z+\alpha y)\|\geq |\alpha|c.
$$ 
We will show that there exists $k>0$ so that $\| |\Theta(z)|-|\Theta(z+\alpha y)|\| \leq k\alpha^2$ for all $\alpha\in\R$.  We now obtain the following upper bound.
\begin{align*}
\big\| |\Theta(z)|-|\Theta(z+\alpha y)|\big\|^2&=\sum_{j\in I} \big||\langle z,x_j\rangle| - |\langle z,x_j\rangle+\alpha\langle y,x_j\rangle i|\big|^2\\
&=\sum_{j\in I}|\langle z,x_j\rangle|^2\big(1-(1+\beta^2)^{1/2}\big)^2\hspace{.5cm}\textrm{ for $\beta_j=\langle z,x_j\rangle^{-1}\langle y,x_j\rangle\alpha$}\\
&\leq \sum_{j\in I}|\langle z,x_j\rangle|^2 \frac{1}{4}\beta_j^4\hspace{1cm}\textrm{ by Taylor's Approximation Theorem,}\\
&=\sum_{j\in I}|\langle z,x_j\rangle|^{-2}|\langle y,x_j\rangle|^4 \frac{1}{4}\alpha^4
\end{align*}

Thus, for $k=2^{-1}(\sum_{j\in I}|\langle z,x_j\rangle|^{-2}|\langle y,x_j\rangle|^4)^{1/2}$ we have that  
$$
\lim_{\alpha\rightarrow 0} \frac{\| |\Theta(z)|-|\Theta(z+\alpha y)|\|}{\min_{|\lambda|=1}\|z-\lambda(z+\alpha y)\|}\leq \lim_{\alpha\rightarrow 0} \frac{k\alpha^2}{c|\alpha|} = 0 
$$

\end{proof}

\section{Stable phase retrieval near $x$ in infinite dimensions}\label{S:inf}

We previously proved that if $(x_j)_{j\in J}$ is a frame or a basis of $\R^n$ with lower frame bound $A$ then for all $x\in\R^n$ we have that $(x_j)_{j\in J}$ does $A^{-1/2}$-stable phase retrieval.  We now consider the situation for infinite dimensional spaces.

\begin{prop}\label{P:basis}
Let $(x_j)_{j=1}^\infty$ be a Riesz basis for a real infinite dimensional Hilbert space $H$.  Let $x\in H$.  Then $(x_j)_{j=1}^\infty$ does stable phase retrieval near $x$ if and only if $\langle x,x_j\rangle=0$ for all but finitely many $j\in\N$.
\end{prop}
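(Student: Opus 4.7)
The plan is to combine the sign-preservation argument of Proposition \ref{P:frame} for the ``if'' direction with a Riesz-basis sign-flip construction for the ``only if'' direction. Throughout, let $A>0$ be a lower Riesz bound of $(x_j)$, and let $(y_j)_{j=1}^\infty$ be the biorthogonal dual Riesz basis, so that $\langle x_i,y_j\rangle=\delta_{ij}$, $\sup_j\|y_j\|<\infty$, and every $v\in H$ expands as $v=\sum_j\langle v,x_j\rangle y_j$.

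For the ``if'' direction, I assume $J_x=\{j:\langle x,x_j\rangle\neq 0\}$ is finite. The argument of Proposition \ref{P:frame} adapts directly: for $y$ sufficiently close to $x$, $\sign\langle y,x_j\rangle=\sign\langle x,x_j\rangle$ for every $j\in J_x$, while for $j\notin J_x$ we have $\langle y,x_j\rangle=\langle y-x,x_j\rangle$. These identities collapse $\||\Theta x|-|\Theta y|\|^2$ into $\sum_{j\in J}|\langle x-y,x_j\rangle|^2\geq A\|x-y\|^2$. When $x\neq 0$ and $y$ is close enough to $x$ we also have $\min_{\lambda=\pm 1}\|x-\lambda y\|=\|x-y\|$, so $(x_j)$ does $A^{-1/2}$-stable phase retrieval near $x$; the case $x=0$ reduces immediately to the lower Riesz bound.

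For the ``only if'' direction, I assume $J_x$ is infinite. Since $\sum_j|\langle x,x_j\rangle|^2<\infty$, I choose indices $j_n\in J_x$ with $\langle x,x_{j_n}\rangle\to 0$, and define $z_n=x-2\langle x,x_{j_n}\rangle y_{j_n}$. Biorthogonality forces $\langle z_n,x_k\rangle=\langle x,x_k\rangle$ for $k\neq j_n$ and $\langle z_n,x_{j_n}\rangle=-\langle x,x_{j_n}\rangle$, so $|\Theta z_n|=|\Theta x|$ identically. Simultaneously $\|z_n-x\|=2|\langle x,x_{j_n}\rangle|\|y_{j_n}\|\to 0$, while $\|x+z_n\|\to 2\|x\|$; hence for $n$ large $\min_{\lambda=\pm 1}\|x-\lambda z_n\|=\|x-z_n\|>0$. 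The alternative $z_n=-x$ would force $x$ to be a scalar multiple of $y_{j_n}$, which can hold for at most one $n$ by linear independence, so $z_n\notin\{\pm x\}$ for all large $n$. The stability ratio is then identically $0$ along the sequence $z_n\to x$, ruling out stable phase retrieval near $x$.

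The main obstacle is conceptual rather than computational: one must confirm that the exactness $|\Theta z_n|=|\Theta x|$ together with $z_n\notin\{\pm x\}$ correctly witnesses the failure of the liminf-style local stability condition, and keep careful track of the branch $\lambda=\pm 1$ selected by the minimum as $y\to x$. Once these edge-case points are pinned down, both directions follow from soft arguments with no substantive calculation.
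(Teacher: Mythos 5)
Your proposal is correct and follows essentially the same route as the paper: the ``if'' direction reuses the sign-preservation computation from Proposition \ref{P:frame} to get the $A^{-1/2}$ bound, and the ``only if'' direction uses the same sign-flip perturbation $x-2\langle x,x_{j_0}\rangle y_{j_0}$ along the biorthogonal dual to produce vectors arbitrarily close to $x$ with identical measurement magnitudes. The extra care you take with the $\lambda=\pm1$ branch, the $x=0$ case, and ruling out $z_n=-x$ only makes explicit points the paper treats briefly.
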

\begin{proof}
Let $x\in H$ and $J=\{j\in\N:\langle x,x_j\rangle\neq0\}$.  We first assume that $J$ is a finite set. There exists $\vp$ so that if $y\in H$ and $\|x-y\|< \vp$ then  $sign(\langle x ,x_j\rangle)=sign(\langle x ,x_j\rangle)$ for all $j\in J$. 
We now assume that $y\in H$ and $\|x-y\|< \vp$.  Let $A$ be the lower frame bound for the Riesz basis $(x_j)_{j=1}^\infty$ Then
\begin{align*}
\sum_{j=1}^\infty&\big||\langle x,x_j\rangle|-|\langle y,x_j\rangle|\big|^2=\sum_{j\in J}\big||\langle x,x_j\rangle|-|\langle y,x_j\rangle|\big|^2 + \sum_{j \notin J}|\langle y,x_j\rangle|^2 \hspace{.5cm}\textrm{ as $\langle x,x_j\rangle=0$ for $j\not\in J$},\\
  &=\sum_{j\in J}|\langle x,x_j\rangle-\langle y,x_j\rangle|^2 + \sum_{j \notin J}|\langle y,x_j\rangle|^2\hspace{.5cm}\textrm{ as }\textrm{$sign(\langle x,x_j\rangle) =sign(\langle y,x_j\rangle) $ for $j\in J$,}\\
  &=\sum_{j\in J}|\langle x-y,x_j\rangle|^2 + \sum_{j \notin J}|\langle x-y,x_j\rangle|^2\hspace{.5cm}\textrm{ as $\langle x,x_j\rangle=0$ for $j\not\in J$},\\
  &=\sum_{j=1}^\infty|\langle x,x_j\rangle-\langle y,x_j\rangle|^2 \geq A \|x-y\|^2\hspace{.5cm}\textrm{ as $(x_j)_{j=1}^\infty$ has lower frame bound $A$.}
\end{align*}
So, $(x_j)_{j=1}^\infty$ does $A^{-\frac{1}{2}}$- stable phase retrieval at $x$. 

We now assume  that $J$ is infinite. Let $\vp >0$. Let $(y_j)_{j=1}^\infty$ be the biorthogonal sequence to $(x_j)_{j=1}^\infty$. That is $\langle y_j ,x_j\rangle = 1$ and  $\langle y_j ,x_i\rangle = 0$ for $i\neq j$. As $(x_j)_{j=1}^\infty$ is Riesz basis we have that $(y_j)_{j=1}^\infty$ is a Riesz basis as well and there exists $c>0$ so that $\|y_j\|\leq c$ for all $j\in \N$. As $\sum|\langle x ,x_{j_0}\rangle|^2$ converges there exits $j_0\in J$ so that $|\langle x, x_{j_0}\rangle| <\vp c^{-1}$. Let $y=x-2\langle x , x_{j_0} \rangle y_{j_0}$.   Thus, 
$$\|x-y\|=2|\langle x, x_{j_0}\rangle|\|  y_{j_0}\|< 2 \vp c^{-1} c=2 \vp$$

We have that  $x\neq y $ and $x\neq - y $ as $\langle x,x_{j_0}\rangle\neq0$. Furthermore, 
$\langle x, x_{j_0}\rangle = -\langle y, x_{j_0}\rangle$ and 
$\langle x, x_{j}\rangle = \langle y, x_{j}\rangle$ for all $j \neq j_0$.  Thus, if $\Theta$ is the analysis operator of $(x_j)_{j=1}^\infty$, we have that $|\Theta x|=|\Theta y|$ but that $x\neq \lambda y$ for all $|\lambda|=1$.  Hence, $(x_j)_{j=1}^\infty$ does not do phase retrieval near $x$.

\end{proof}

Cahill, Casazza, and Daubechies proved that no frame for an infinite dimensional Hilbert space does stable phase retrieval \cite{CCD}, which was generalized to continuous frames for infinite dimensional Hilbert spaces by Alaifari and Grohs \cite{AG} (However, it is proven in \cite{CDFF} that stable phase retrieval is possible for infinite dimensional subspaces of $L_2(\R)$).  In both proofs of instability, the authors show that if $\vp>0$, $(x_t)_{t\in\Omega}$ is a continuous frame of an infinite dimensional Hilbert space $H$, and $f\in H$ has $\|f\|=1$ then it is possible to choose $g\in H$ with $\|g\|=1$, $\langle f,g\rangle=0$, and $\|\min(|\Theta f|,|\Theta g|)\|_{L_2(\Omega)}<\vp$.  It then follows for $x=f+g$ and $y=f-g$ that $\|x-\lambda y\|=2$ for all $|\lambda|=1$ but that $\||\Theta x|-|\Theta y|\|_{L_2(\Omega)}<2\vp$.  Note that the vectors $x$ and $y$ which witness the instability are orthogonal and as $\vp\searrow0$ the corresponding nets $(x_\vp)$ and $(y_\vp)$ are both not convergent.  We know by Theorem \ref{T:ortho} that instability of phase retrieval is always greatest at orthogonal vectors, and by Proposition \ref{P:basis} we know that there exists examples of continuous frames for infinite dimensional Hilbert spaces which do stable phase retrieval near some vectors.  In the following theorem we extend the results of \cite{CCD} and \cite{AG} by proving that the set of vectors where a continuous frame for an infinite dimensional Hilbert space $H$ does not do stable phase retrieval is dense in $H$.

\begin{thm}
Let $(x_t)_{t\in\Omega}$ be a continuous frame for an infinite dimensional Hilbert space $H$.  Then there exists $x\in H$ such that $(x_t)_{t\in\Omega}$ does not do stable phase retrieval near $x$.  Furthermore, the set of $x\in H$ where $(x_t)_{t\in\Omega}$ does not do stable phase retrieval near $x$ is dense in $H$.
\end{thm}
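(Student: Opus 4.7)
The plan is to amplify the Cahill--Casazza--Daubechies and Alaifari--Grohs global instability construction to produce, for every $h \in H$ and every $\delta > 0$, a vector $x \in H$ with $\|x-h\| < \delta$ at which stable phase retrieval fails locally. First I would iterate the CCD/AG construction in finite-codimensional subspaces: at stage $n$, working inside the infinite-dimensional subspace $K_n := \spn\{h, f_1, g_1, \dots, f_{n-1}, g_{n-1}\}^\perp$, extract orthogonal unit vectors $f_n, g_n \in K_n$ satisfying $\bigl\|\min(|\Theta f_n|, |\Theta g_n|)\bigr\|_{L_2(\Omega)} < \varepsilon_n$ for a prescribed rapidly decaying sequence $\varepsilon_n \to 0$. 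The pairs $(f_n, g_n)_{n=1}^\infty$ so obtained are mutually orthogonal and orthogonal to $h$. I would then pick positive scalars $\alpha_n$ with $\sqrt 2 \sum_n \alpha_n < \delta$ and $\varepsilon_n/\alpha_n \to 0$, and set
\[
x = h + \sum_{n=1}^\infty \alpha_n(f_n + g_n), \qquad y_n = x - 2\alpha_n g_n.
\]
Then $\|x - h\| < \delta$, $\|y_n - x\| = 2\alpha_n \to 0$, and since $\langle g_n, x\rangle = \alpha_n$ while $x$ has a rich orthogonal decomposition, $y_n \notin \{\lambda x : |\lambda| = 1\}$ for large $n$; in fact $\min_{|\lambda|=1}\|x - \lambda y_n\| = 2\alpha_n$ whenever $\|x\|^2 > 2\alpha_n^2$.

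The heart of the argument is to show $\bigl\||\Theta x| - |\Theta y_n|\bigr\|_{L_2} = o(\alpha_n)$, which forces the stability ratio at $x$ to tend to zero. Writing $\Theta x = U_n + \alpha_n\Theta g_n$ and $\Theta y_n = U_n - \alpha_n\Theta g_n$ with $U_n := \Theta h + \sum_{k\neq n}\alpha_k(\Theta f_k + \Theta g_k) + \alpha_n\Theta f_n$, the pointwise inequality $\bigl||u+v| - |u-v|\bigr| \leq 2\min(|u|, |v|)$ (valid over $\R$ and $\C$) yields
\[
\int_\Omega \bigl||\Theta x| - |\Theta y_n|\bigr|^2 d\mu \leq 4 \int_\Omega \min\bigl(|U_n|^2, \alpha_n^2|\Theta g_n|^2\bigr) d\mu.
\]
I would split this integral over $F_n := \{\tau : |\Theta g_n(\tau)| \leq |\Theta f_n(\tau)|\}$ and its complement. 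On $F_n$ the CCD/AG bound gives $\int_{F_n}|\Theta g_n|^2 < \varepsilon_n^2$, so this piece contributes at most $4\alpha_n^2\varepsilon_n^2 = o(\alpha_n^2)$.

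The main obstacle is the complementary set $F_n^c$, where $|\Theta g_n|$ may be large even though $\int_{F_n^c}|\Theta f_n|^2 < \varepsilon_n^2$; there one must use $\min(|U_n|^2, \alpha_n^2|\Theta g_n|^2) \leq |U_n|^2$ and demonstrate that $\int_{F_n^c}|U_n|^2 d\mu = o(\alpha_n^2)$. I expect this will require strengthening Step~1, selecting each pair $(f_n, g_n)$ not only so that $\bigl\|\min(|\Theta f_n|, |\Theta g_n|)\bigr\|_{L_2} < \varepsilon_n$ but also so that $F_n^c$ carries negligible $L_2$-mass of $\Theta h$ and of the previously selected $\Theta(f_k + g_k)$ for $k < n$---in other words, the $\Theta$-supports of the later pairs should be approximately disjoint from the background accumulated so far. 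The infinite-dimensional freedom in each $K_n$ should allow this additional constraint, mirroring how CCD/AG exploit asymptotic quasi-orthogonality in $L_2(\Omega)$. Once $\bigl\||\Theta x| - |\Theta y_n|\bigr\|_{L_2} = o(\alpha_n)$ is established, the continuous frame $(x_t)_{t\in\Omega}$ fails to do stable phase retrieval near $x$; since $h \in H$ and $\delta > 0$ were arbitrary, the failure set is dense in $H$, which in particular yields the existence statement.
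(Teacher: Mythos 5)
Your overall architecture is the same as the paper's: superpose countably many ``almost $\Theta$-disjointly supported'' unit vectors with summable coefficients near a given $h$, flip the sign of one coefficient to produce $y_n\to x$, and show the numerator decays faster than the denominator (your computation that $\min_{|\lambda|=1}\|x-\lambda y_n\|=2\alpha_n$ is correct and is in fact cleaner than the paper's lower bound via the upper frame constant). However, the step you defer --- ``I expect this will require strengthening Step 1 \dots the infinite-dimensional freedom should allow this'' --- is precisely the heart of the proof, and the way you propose to organize it would not work. The set $F_n^c=\{|\Theta g_n|>|\Theta f_n|\}$ is not a set you can force to carry negligible mass of $\Theta h$: it is defined by a pointwise comparison of two functions that are both small in $L_2$ on most of $\Omega$, and it can easily contain essentially all of the support of $\Theta h$ (e.g.\ if $\Theta f_n$ vanishes where $\Theta g_n$ is merely tiny but nonzero). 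Since $U_n$ contains $\Theta h$, whose norm is of order $1$, the bound $\int_{F_n^c}\min(|U_n|^2,\alpha_n^2|\Theta g_n|^2)\,d\mu\le\int_{F_n^c}|U_n|^2\,d\mu$ cannot be made $o(\alpha_n^2)$ this way. You also omit the tail $\sum_{k>n}\alpha_k\Theta(f_k+g_k)$ from the list of background terms that must be controlled; with only geometric decay of the $\alpha_k$ this tail is $O(\alpha_n)$, not $o(\alpha_n)$, so it too must be handled by a support condition rather than by coefficient decay.

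The missing ingredient, which the paper supplies, is to replace the comparison sets $F_n$ by explicit \emph{finite-measure} sets $\Omega_n\subseteq\Omega$ on which $\Theta z_n$ concentrates. After renormalizing so that $\|x_t\|=1$ (absorbing $\|x_t\|^2$ into the measure), the restricted analysis operator $\Theta_{\Omega'}$ is Hilbert--Schmidt, hence compact, whenever $\mu(\Omega')<\infty$; therefore in any infinite-dimensional (in particular finite-codimensional) subspace one can find unit vectors with $\|\Theta_{\Omega'}z\|$ arbitrarily small. This lets one build, by induction, an orthonormal sequence $(z_j)$ and pairwise disjoint finite-measure sets $(\Omega_j)$ with $\|\Theta_{\Omega_j}z_j\|\ge 1$, $\|\Theta_{\Omega_j^c}z_j\|\le 4^{-j}$, and $\|\Theta_{\Omega_i}z_j\|\le 2^{-1}4^{-i}$ for $j\ne i$ (note the two-sided condition: later vectors are chosen small on earlier sets, and later sets are chosen where earlier vectors are small). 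The numerator estimate then splits over $\Omega_k$ and $\Omega_k^c$: on $\Omega_k^c$ the perturbed coordinate $\Theta z_k$ is small, and on $\Omega_k$ the entire background is small --- no pointwise comparison set is needed, and indeed no partner vector $f_n$ or $\min(|\Theta f|,|\Theta g|)$ condition is needed either. Until you carry out a construction of this type, the claim $\||\Theta x|-|\Theta y_n|\|=o(\alpha_n)$ is unsupported, so the proposal as written has a genuine gap.
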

\begin{proof}
Let $(x_t)_{t\in\Omega}$ be a continuous frame for an infinite dimensional Hilbert space $H$ over the measure space $(\Omega,\mu)$ and let $\Theta:H\rightarrow L_2(\Omega)$ be the analysis operator.  Without loss of generality we may assume that $x_t\neq 0$ for all $t\in\Omega$.  By replacing $(x_t)_{t\in\Omega}$ 
with $(x_t/\|x_t\|)_{t\in\Omega}$ and replacing the measure $d\mu$ with $\|x_t\|^2 d\mu$ we are able to assume that $\|x_t\|=1$ for all $t\in\Omega$. We are able to make this change of measure
because for all measureable $J\subseteq\Omega$ we have that
$$\int_J |\langle x,x_t\rangle|^2 d\mu(t)=\int_J \big|\big\langle x,\frac{1}{\|x_t\|}x_t \big\rangle\big|^2 \|x_t\|^2 d\mu(t)\hspace{1cm}\textrm{ for all }x\in H.
$$

For simplicity, we assume that $(x_t)_{t\in\Omega}$ has a lower frame bound $A>1$.
  For a measurable set $\Omega'\subseteq\Omega$, we define the analysis operator restricted to $\Omega'$ by  
$\Theta_{\Omega'}(x)=(1_{\Omega'}(t)\langle x,x_t\rangle)_{t\in\Omega}$.  Our assumption that $\|x_t\|=1$ for all $t\in\Omega$ implies that $\Theta_{\Omega'}$ is a compact operator whenever $\Omega'$ has finite measure.  
 Let $z_0\in H$ with $\|z_0\|=1$. 
Let $\vp_j\searrow0$ such that $\vp_j<4^{-j}$ for all $j\in\N$. We now claim that we may build an ortho-normal sequence of vectors $(z_j)_{j=0}^\infty \subseteq H$ and a pairwise disjoint sequence of finite measure sets $(\Omega_j)_{j=0}^\infty\subseteq\Omega$ such that 
\begin{enumerate}
    \item $\|\Theta_{\Omega_j} z_j\|\geq 1$\hspace{1cm} for all $j\in\N_0$,\\
    \item $\|\Theta_{\Omega^c_j} z_j\|\leq 4^{-j}$\hspace{1cm} for all $j\in\N_0$,\\
        \item $\|\Theta_{\Omega_i} z_j\|\leq 2^{-1}4^{-i}$\hspace{1cm} for all $j\neq i$.
\end{enumerate}

We now prove the claim by induction. We have already fixed $z_0\in H$, and as $(x_t)_{t\in\Omega}$ has lower frame bound $A>1$ we may choose a finite  measure subset $\Omega_0\subset \Omega$ so that $\|\Theta_{\Omega_0} z_0\|\geq 1$ and $\|\Theta_{\Omega^c_0} z_0\|\leq 1$.  Thus, (1) and (2) are satisfied and we have that (3) is vacuously true. 

We now let $k\in\N_0$ and assume that $(z_j)_{j=0}^k$ and   $(\Omega_j)_{j=0}^k$ have been chosen.  Choose a finite measure subset $\Omega'\subseteq \Omega$ so that $\cup_{j=0}^k \Omega_j\subseteq \Omega'$ and $\|\Theta_{\Omega'^c} z_j\|\leq 4^{-k-1}$ for all $0\leq j\leq k$.  
 Thus, (3) is satisfied for $0\leq j\leq k$ and $i=k+1$ as long as $\Omega_{k+1}\subseteq \Omega'^c$.
Let $\vp>0$ so that $\vp<A-1$ and $\vp<4^{-k-1}$.  As $\Theta_{\Omega'}$ is a compact operator we may choose a normalized vector $z_{k+1}$ in $H$ such that $z_j$ is orthogonal to $(z_j)_{j=0}^k$ and $\|\Theta_{\Omega'} z_{k+1}\|< \vp$.  As $\vp<4^{-k-1}$, we have that (3) is satisfied for  $j=k+1$ and $0\leq i\leq k$.  As $\vp<A-1$ we may choose a finite measure subset $\Omega_{k+1}\subseteq \Omega'^c$ so that $\|\Theta_{\Omega_{k+1}} z_{k+1}\|\geq 1$ and $\|\Theta_{\Omega^c_{k+1}} z_{k+1}\|\leq 4^{-k-1}$.  Thus, (1) and (2) are satisfied and our induction is complete.

As a consequence of (3) and $(x_t)_{t\in\Omega}$ having lower frame bound $A>1$, we have for all $k\geq 2$ that,

\begin{equation}\label{E:lower1}
\Big\|\Theta_{\Omega_k^c}\sum_{j\neq k} 2^{-j}z_j\Big\|\geq \Big\|\Theta \sum_{j\neq k} 2^{-j}z_j\Big\|-\sum_{j\neq k}2^{-j}\|\Theta_{\Omega_k}z_j \|\geq (\sum_{j\neq k}2^{-2j})^{1/2}-4^{-k}> (1+4^{-1})^2- 4^{-2}> 1
\end{equation}

We let $x=\sum_{j=0}^\infty 2^{-j}z_j$.  Let  $k\geq 2$,  $y=-2^{-k}z_k+\sum_{j\neq k}2^{-j}z_j$, and let $\lambda$ be a scalar with $|\lambda|=1$.  We have the following lower bound for $B\|x-\lambda y\|^2$ where $B$ is the upper frame bound of $(x_t)_{t\in\Omega}$.

\begin{align*}
    &B\|x-\lambda y\|^2\geq \|\Theta (x-\lambda y)\|^2\\
    &= \|\Theta_{\Omega_k^c} (x-\lambda y)\|^2+\|\Theta_{\Omega_k} (x-\lambda y)\|^2\\
     &= \big\|(1-\lambda)\Theta_{\Omega_k^c}(\sum_{j\neq k}2^{-j}z_j)+(1+\lambda)\Theta_{\Omega_k^c}(2^{-k}z_k) \big\|^2+\big\|(1+\lambda)\Theta_{\Omega_k}2^{-k}z_k)+ (1-\lambda)\Theta_{\Omega_k}(\sum_{j\neq k}2^{-j}z_j)\big\|^2\\
          &\geq \Big|\big\|(1-\lambda)\Theta_{\Omega_k^c}\sum_{j\neq k}2^{-j}z_j\big\|-\big\|(1+\lambda)2^{-k}\Theta_{\Omega_k^c}z_k \big\|\Big|^2+\Big|\big\|(1+\lambda)2^{-k}\Theta_{\Omega_k}z_k\big\|-\big\| (1-\lambda)\Theta_{\Omega_k}\sum_{j\neq k}2^{-j}z_j\big\|\Big|^2\\
&\geq \frac{|1-\lambda|^2}{2}\big\|\Theta_{\Omega_k^c}\sum_{j\neq k}2^{-j}z_j\big\|^2-4\cdot2^{-2k}\|\Theta_{\Omega_k^c}z_k \|^2+\frac{|1+\lambda|^2}{2}2^{-2k}\|\Theta_{\Omega_k}z_k\|^2- 4\big\|\Theta_{\Omega_k}\sum_{j\neq k}2^{-j}z_j\big\|^2\\
&\hspace{4cm}\textrm{ as $(a-b)^2\geq (1/2)a^2-b^2$ for all $a,b\in\R$,} \\
&\geq \frac{|1-\lambda|^2}{2}-4\cdot 2^{-2k}4^{-2k}+\frac{|1+\lambda|^2}{2} 2^{-2k}- 4\big\|\sum_{j\neq k}2^{-j}\Theta_{\Omega_k}z_j\big\|^2\hspace{.5cm}\textrm{ by \eqref{E:lower1}, (2), (1),}\\
&\geq (|1-\lambda|^2+|1+\lambda|^2) 2^{-2k-1}-4^{-3k+1} -4(\sum_{j\neq k}2^{-j}\|\Theta_{\Omega_k}z_j\|)^2\\
&\geq 2^{-2k-1}-4^{-3k+1} -4\cdot 4^{-2k} \hspace{.5cm}\textrm{ by  (3).}\\
\end{align*}

On the other hand, we have that 
\begin{align*}
    \big\| |\Theta x|&-|\Theta y| \big\|^2 =\big\| |\Theta_{\Omega_k^c} x|- |\Theta_{\Omega_k^c} y|\big\|^2+\big\| |\Theta_{\Omega_k} x|-|\Theta_{\Omega_k}y|\big\|^2\\
    &= \Big\| \big|\Theta_{\Omega_k^c}(2^{-k} z_k+\sum_{j\neq k}2^{-j}z_j)\big|- \big|\Theta_{\Omega_k^c}(-2^{-k} z_k+\sum_{j\neq k}2^{-j} z_j)\big|\Big\|^2\\
    &\hspace{1cm}+\Big\| \big|\Theta_{\Omega_k} (2^{-k} z_k+\sum_{j\neq k}2^{-j}z_j)\big|-\big|\Theta_{\Omega_k}(-2^{-k} z_k+\sum_{j\neq k}2^{-j} z_j)\big|\Big\|^2\\
    &\leq  \Big(2^{-k} \| \Theta_{\Omega_k^c}z_k\| + 2^{-k} \| \Theta_{\Omega_k^c}z_k\|\Big)^2+ \Big(\big\| \Theta_{\Omega_k} \sum_{j\neq k}2^{-j}z_j\big\|+\big\|\Theta_{\Omega_k}\sum_{j\neq k}2^{-j} z_j\big\|\Big)^2\\
        &\leq  2^{-2k+2}\| \Theta_{\Omega_k^c}z_k\|^2+ 4 \Big(\sum_{j\neq k}2^{-j}\|\Theta_{\Omega_k} z_j\|\Big)^2\\
        &\leq  2^{-2k+2} 4^{-2k} + 4\Big(\sum_{j\neq k}2^{-j-1}4^{-k}\Big)^2\hspace{.5cm}\textrm{ by  (2) and (3),}\\
                &\leq  4^{-3k+1} + 4^{-2k+1}\\
\end{align*}

Thus, we have that $B\min_{|\lambda|=1}\|x-\lambda y\|^2\geq  2^{-2k-1}-4^{-3k+1} -4^{-2k+1}$ and $\| |\Theta x|-|\Theta y| \|^2\leq 4^{-3k+1} + 4^{-2k+1}$.  Hence, $ \lim_{y\rightarrow x} \| |\Theta x|-|\Theta y| \|/\min_{|\lambda|=1}\|x-\lambda y\|=0$.  This proves that $(x_t)_{t\in\Omega}$ does not do stable phase retrieval near $x$.  

Note that the same proof would give for all $N\in\N$ that $(x_t)_{t\in\Omega}$ does not do stable phase retrieval at $z_0+\sum_{j=N}^\infty 2^{-j}z_j$.  As, $z_0=\lim_{N\rightarrow\infty} z_0+\sum_{j=N}^\infty 2^{-j}z_j$ we have that the set of vectors $x$ in $H$ where $(x_t)_{t\in\Omega}$ does not do stable phase retrieval at $x$ is dense in $H$.

\end{proof}

\end{document}